\def\be{\begin{equation}}
\def\ee{\end{equation}}
\def\bq{\begin{eqnarray}}
\def\eq{\end{eqnarray}}
\def\beq{\begin{eqnarray*}}
\def\eeq{\end{eqnarray*}}
\newtheorem {theorem} {Theorem}
\newtheorem {proposition} [theorem]{Proposition}
\newtheorem {corollary} [theorem]{Corollary}
\newtheorem {lemma}  [theorem]{Lemma}
\newcommand{\R}{\mathbb{R}}
\newcommand{\Q}{\mathbb{Q}}
\newcommand{\N}{\mathbb{N}}
\begin{document}

\title[Existence of  numerically detected limit cycles]
{Proving the existence of  numerically detected planar limit cycles}

\author[A. Gasull, H. Giacomini and M. Grau]
{Armengol Gasull$^1$, H\'ector Giacomini$^2$ and Maite Grau$^3$}

\address{$^1$ Departament de Matem\`{a}tiques, Universitat
Aut\`{o}noma de Barcelona, 08193 Bellaterra, Barcelona, Catalonia,
Spain}

\email{gasull@mat.uab.cat}

\address{$^2$ Laboratoire de Math\'ematiques et Physique Th\'eorique.
C.N.R.S. UMR 7350., Facult\'e des Sciences et Techniques.
Universit\'e de Tours., Parc de Grandmont 37200 Tours, France.}

\email{Hector.Giacomini@lmpt.univ-tours.fr}

\address{$^3$ Departament de Matem\`atica, Universitat de Lleida,
Avda. Jaume II, 69; 25001 Lleida, Catalonia, Spain}

\email{mtgrau@matematica.udl.cat}

\subjclass[2010]{34C05, 34C07, 37C27, 34C25, 34A34}

\keywords{transversal curve; Poincar\'e--Bendixson region; limit
cycle; planar differential system}
\date{}
\dedicatory{}
\thanks{Corresponding author: Maite Grau. E-mail: \texttt{mtgrau@matematica.udl.cat}}

\maketitle

\begin{abstract}
This paper deals with the problem of location and existence of limit cycles for real planar
polynomial differential systems. We provide a method to construct Poincar\'e--Bendixson regions by
using transversal curves, that enables us to prove the existence of a limit cycle that has been
numerically detected. We apply our results to several known systems, like  the Brusselator one
or some Li\'{e}nard systems,  to prove the existence of the limit cycles and to locate them very
precisely  in the phase space. Our method, combined with some other classical tools can be applied
to obtain  sharp bounds for the bifurcation values of a saddle-node bifurcation of limit cycles, as we do for the Rychkov system.
\end{abstract}

\section{Introduction\label{sect1}}

We consider real planar polynomial differential systems of the form
\begin{equation} \label{eq1} \dot{x}   =  dx/dt=  P(x,y), \quad
\dot{y}   = dy/dt=   Q(x,y), \end{equation} where $P(x,y)$ and
$Q(x,y)$ are real polynomials. We denote by $X=(P,Q)$ the vector
field associated to \eqref{eq1} and $z=(x,y)$. So, \eqref{eq1} can
be written  as $\dot z=X(z).$

When dealing with system (\ref{eq1}) one of the main problems is to determine the number and location
of its limit cycles. Recall that a limit cycle is an isolated periodic orbit of the system.
For a given vector
field, when it is not very near of a bifurcation, the limit cycles can usually be detected by
numerical methods. A bifurcation is a qualitative change in the behaviour of a vector field as a
parameter of the system is varied. This phenomenon can involve a change in the stability of a
limit cycle or the creation or destruction of one or more limit cycles. If a periodic orbit is stable (unstable),
then forward (backward) numerical integration of a trajectory with an initial condition in its
basin of attraction will converge to the periodic orbit as $t\to\infty$ ($t\to-\infty$). Once for a given vector field a
limit cycle is numerically detected  there is no general method to rigourously prove  its existence. In this work  we present a procedure to prove the existence  of a limit cycle in that situation. The method
is based on the Poincar\'e--Bendixson theorem, see for instance \cite{DuLlAr, Perko} and also
Theorem~\ref{thpb}. Poincar\'e--Bendixson theorem (cf. Theorem~\ref{thpb}) can be very useful to prove the existence of a limit cycle and to give a region where it is located. However, this result is hardly found in applications due to the
difficulty of constructing the boundaries of a Poincar\'e--Bendixson region. Our aim in this work
is to give a constructive procedure for finding transversal curves which
define Poincar\'e--Bendixson regions and thus, to prove the
existence of limit cycles that have been numerically detected.

\smallskip
Consider a smooth  and non-empty curve $C$ in $\mathbb{R}^2$. Let $C   =   \{  z ( s )  = ( x(
s ),  y( s ))\,  :\,
 s  \in \mathcal{I} \}$ be a class $\mathcal{C}^1$ parametrization of  $C$, where
 $\mathcal{I}$ is a real interval. It is said that $C$ is
{\it regular} if $ z '( s ) \neq (0,0)$ for all $ s  \in
\mathcal{I}$. Given  $z=(x,y)$ we set $z^{\perp}=(y,-x)$ and
$(x_1,y_1)\cdot(x_2,y_2)=x_1 x_2+y_1 y_2$. A \emph{contact point} with the flow given by~\eqref{eq1} is a point $z(s)$ such that the tangent vector to $C$ at this point, $z'(s)$  is parallel to $X(z(s)).$

As usual, we will say that a curve $C$ is \emph{transversal} with respect to the flow given
by~\eqref{eq1}
if the scalar product \[X(z(s))\cdot ( z'(s))^{\perp} =  P\left( z ( s )\right) y'( s ) -
Q\left( z ( s )\right)    x'( s )\]
does not change sign and vanishes only on
finitely many contact points. When the above scalar product does not vanish we will say that the
curve is {\it strictly transversal}. Notice that intuitively, these definitions mean that the flow
of system {\rm (\ref{eq1})} ``crosses $C$ in the same direction" on all its points.

 A $\mathcal{C}^1$ closed plane curve $C$ is a
regular parameterized curve $ z : [a,b] \longrightarrow \mathbb{R}^2$ such that $ z $ and  its
derivative coincide at $a$ and $b$. The curve is said to be simple if it has no
self-intersections, that is if $ s _1,  s _2 \in [a,b)$ and $ s _1 \neq  s _2$, then $ z ( s _1)
\neq  z ( s _2)$. For further information about these classical concepts, see for instance
\cite{doCarmo}.

\smallskip

 A transversal section of system (\ref{eq1}) is an arc of a curve without
 contact points. Given a limit cycle $\Gamma$ there always exist a transversal
 section $\Sigma$ which can be parameterized by $r \in (-\rho,\rho)$
 with $\rho>0$ and $r=0$ corresponding to a common point between $\Gamma$
 and $\Sigma$. Given $r \in (-\rho,\rho)$, we consider the flow of
 system (\ref{eq1}) with initial point the one corresponding to $r$ and we follow
 this flow for positive values of $t$. It can be shown, see for instance \cite{Perko},
 that for $\rho$ small enough, the flow cuts $\Sigma$ again at some point corresponding to the parameter
 $\mathcal{P}(r)$. The map $r \longrightarrow \mathcal{P}(r)$ is called the
 Poincar\'e map associated to the limit cycle $\Gamma$ of system (\ref{eq1}).
 It is clear that $\mathcal{P}(0)   =   0$. If $\mathcal{P}'(0) \neq 1$, the
 limit cycle $\Gamma$ is said to be {\it hyperbolic}. If the expansion of
 $\mathcal{P}(r)$ around $r=0$ is of the form $\mathcal{P}(r)   =
   r   +   a_\mu r^\mu   +   \mathcal{O}(r^{\mu+1})$ with $a_{\mu}
 \neq 0$ and $\mu \geq 2$, we say that $\Gamma$ is a multiple limit cycle
 of multiplicity $\mu$. A classical result, see for instance \cite{Perko},
 states that if $\Gamma   =   \left\{ \gamma(t)   :   t \in [0,T) \right\}$,
  where $\gamma(t)$ is the parametrization of the limit cycle in the time variable
   $t$ of system (\ref{eq1}) and $T>0$ is the period of $\Gamma$, that is, the
   lowest positive value for which $\gamma(0) = \gamma(T)$, and $\gamma(0) =
   \Gamma \cap \Sigma$, then
\begin{equation*} \mathcal{P}'(0)   =   \exp \left\{ \int_{0}^{T}
\operatorname{div} X\left( \gamma(t) \right) dt \right\}, \end{equation*} where
\[\operatorname{div} X(x,y)
= \frac{\partial P}{\partial x}(x,y)   +   \frac{\partial Q}{\partial y}(x,y) \]
is the {\em
divergence} of $X$. Hence
\[
k:= \int_{0}^{T}
\operatorname{div} X\left( \gamma(t) \right) dt\ne0
\]
is the condition for a limit cycle $\Gamma$ to be hyperbolic. It is
clear that if $k>0$ (resp. $k<0$), then $\Gamma$ is an unstable
(resp. stable) limit cycle. If $\Gamma$ is a multiple limit cycle of
multiplicity $\mu$ and $\mu$ is odd, then $\Gamma$ is unstable if
$a_\mu>0$ and stable if $a_\mu<0$. If $\mu$ is even, then the limit
cycle $\Gamma$ is said to be semi-stable. For the definitions and
related results, see for instance \cite{DuLlAr, Perko, YanQian}.

The Poincar\'e--Bendixson theorem, which can be found for instance in  \cite[Sec. 1.7]{DuLlAr} or
in \cite[Sec. 3.7]{Perko}, has as a corollary the following result which motivates the definition
of Poincar\'e--Bendixson region. See also Theorem 4.7 of \cite[Chap. 1]{ZhangZF}.
\begin{theorem} \label{thpb} {\rm [Poincar\'{e}-Bendixson annular Criterion]}
Suppose that $R$ is a finite region of the plane $\mathbb{R}^2$ lying between two
$\mathcal{C}^1$ simple disjoint
closed curves $C_1$ and $C_2$. If
\begin{itemize}
\item[{\rm (i)}] the curves $C_1$ and $C_2$ are transversal for system
{\rm (\ref{eq1})} and the flow crosses them towards the interior of $R$, and
\item[{\rm (ii)}] $R$ contains no critical points.
\end{itemize}
Then, system {\rm (\ref{eq1})} has an odd number of limit cycles
(counted with multiplicity) lying inside $R$.
\end{theorem}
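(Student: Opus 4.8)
The plan is to reduce the whole statement to the study of a one‑dimensional displacement function built from a Poincaré map, and to read off both existence and the parity from its boundary behaviour. Say $C_1$ is the inner and $C_2$ the outer of the two disjoint closed curves, so that $R$ is an annular region. First I would observe that hypothesis (i) makes $R$ \emph{positively invariant}: since $X$ is transversal to $C_1$ and to $C_2$ and in both cases points into $R$, no orbit meeting $\overline R$ can ever leave it. Together with hypothesis (ii) (no critical points) and the compactness of $R$, the classical Poincar\'e--Bendixson theorem then applies to any forward orbit in $R$: its $\omega$-limit set must be a periodic orbit contained in $R$. This already yields at least one limit cycle and, more usefully, guarantees that every orbit in $R$ keeps returning, which is what makes a return map well defined.

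Next I would encode the counting in a Poincar\'e map. I would construct a $\mathcal{C}^1$ arc $\Sigma$ transversal to $X$ joining a point of $C_1$ to a point of $C_2$, chosen to be a \emph{global cross-section} of the flow in $R$ (met by every orbit, and met exactly once per period by each periodic orbit), parametrized by $r\in[0,1]$ with $r=0$ on $C_1$ and $r=1$ on $C_2$. By positive invariance and the absence of equilibria, the first-return map $\mathcal{P}\colon\Sigma\to\Sigma$ is well defined and of class $\mathcal{C}^1$, and its fixed points are exactly the intersections of $\Sigma$ with the periodic orbits lying in $R$. Setting $d(r)=\mathcal{P}(r)-r$, the limit cycles in $R$ correspond to the zeros of $d$, and the multiplicity $\mu$ of a limit cycle $\Gamma$ is precisely the order of the corresponding zero $r_0$, since near $r_0$ one has $\mathcal{P}(r)=r+a_\mu (r-r_0)^{\mu}+\mathcal{O}((r-r_0)^{\mu+1})$ with $a_\mu\neq0$.

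The parity then comes from the signs of $d$ at the endpoints. Because the flow enters $R$ through $C_1$, the orbit through the endpoint $r=0$ immediately enters the interior of $R$ and can never cross $C_1$ again, so $\mathcal{P}(0)\in(0,1)$ and hence $d(0)>0$; symmetrically, the entering condition on $C_2$ forces $\mathcal{P}(1)\in(0,1)$ and $d(1)<0$. Thus $d$ changes overall sign from $+$ to $-$, which by the intermediate value theorem recovers existence and, crucially, forces an \emph{odd} number of sign changes. Since $d$ changes sign at a zero $r_0$ exactly when its order $\mu$ is odd, the number of odd-multiplicity limit cycles is odd. Invoking the finiteness of the number of periodic orbits in the compact region $R$ (finiteness theorem for polynomial vector fields), label the limit cycles $r_1<\cdots<r_n$ with multiplicities $\mu_i$; then $\sum_i \mu_i \equiv \#\{i:\mu_i\ \text{odd}\} \pmod 2$, and the right-hand side is odd, so the total count with multiplicity is odd, as claimed.

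The hard part will be the construction and legitimacy of the global cross-section $\Sigma$: I must show it can be kept transversal all the way across $R$ and, above all, that each periodic orbit meets it exactly once, so that the correspondence between zeros of $d$ and limit cycles (with multiplicities) is clean rather than over-counted. I would handle this using that a nonsingular flow on a compact annulus with transverse, inward-pointing boundary admits such a section, together with the fact that the periodic orbits in $R$ are nested simple closed curves each separating $C_1$ from $C_2$. The remaining points, namely the $\mathcal{C}^1$ regularity of $\mathcal{P}$ (hence of $d$) and the finiteness of the set of periodic orbits, are comparatively routine.
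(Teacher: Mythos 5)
Before comparing, note that the paper contains no proof of Theorem~\ref{thpb}: it is stated as a classical corollary of the Poincar\'e--Bendixson theorem, with pointers to \cite{DuLlAr,Perko} and to Theorem 4.7 of \cite[Chap.~1]{ZhangZF}. Your displacement-function argument is, in outline, exactly the proof given in those references (positive invariance, a cross-section from $C_1$ to $C_2$, endpoint signs of $d(r)=\mathcal{P}(r)-r$, parity of sign changes), so you are on the standard route; the question is whether your version closes, and two steps do not.

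The first gap is the global cross-section, i.e.\ the existence of one arc without contact $\Sigma$ joining $C_1$ to $C_2$. You rightly call this the hard part, but the statement you propose to use in its place --- ``a nonsingular flow on a compact annulus with transverse, inward-pointing boundary admits such a section'' --- is precisely what has to be proved, so at its crux the argument is circular as written. Two remarks. The requirement that each periodic orbit meet $\Sigma$ \emph{exactly} once does not have to be engineered into the construction: it is automatic, since a closed orbit meets any arc without contact at most once (successive intersections of an orbit with a transversal arc are monotone along the arc, by the usual Jordan-curve argument, and a monotone sequence that repeats is constant), while ``at least once'' follows from the separation property you correctly state. So the only thing left to build is a single transversal arc crossing the annulus; it can be assembled through the finitely many spiral subannuli between consecutive limit cycles --- and, amusingly, the Corollary in section~\ref{sec2} of this very paper, which produces strictly transversal arcs ``parallel'' to any orbit piece, is exactly the kind of tool that supplies the pieces --- but as submitted this step is asserted, not proved.

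The second gap concerns finiteness and the meaning of multiplicity. You invoke ``the finiteness theorem for polynomial vector fields'' to get finitely many \emph{periodic orbits} in $R$. That theorem (\'Ecalle--Ilyashenko) bounds the number of \emph{limit cycles}; it does not exclude a period annulus inside $R$, that is, a band of non-isolated periodic orbits, in which case the zeros of $d$ are non-isolated, the orders $\mu_i$ are undefined, and your parity bookkeeping collapses. The cheap and correct repair is analyticity: take $\Sigma$ analytic; since the flow of a polynomial field is analytic, $d$ is analytic on the section, and if $d$ vanished on a subinterval it would vanish identically, contradicting $d(0)>0$. Hence all zeros are isolated and of finite order, there are finitely many by compactness, and each multiplicity is well defined and equals the order of the zero. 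With this in place, your endpoint signs $d(0)>0>d(1)$ (which you justify correctly from inward transversality: the orbit through an endpoint can never recross the boundary curve it started on, nor reach the other one) and the congruence $\sum_i \mu_i \equiv \#\{i : \mu_i \ \text{odd}\} \pmod{2}$ do yield the odd count.
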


In such a case, we say that $R$ is a \emph{Poincar\'e--Bendixson annular region}
for system~(\ref{eq1}).

\smallskip

As we have already stated our aim is to find transversal curves
which define Poincar\'e--Bendixson annular regions and thus, to
prove the existence of limit cycles, as well as to locate them. In
the paper \cite{GGconic} we dealt with the same problem and we
described a way to provide transversal conics which give rise to a
Poincar\'e--Bendixson annular region. In this previous paper we
treated several examples for which we numerically knew the existence
of a limit cycle, but we did not use this information. Besides, we
could not ensure the existence of the transversal conics. In the
present work we give an answer to the following question: if one
numerically knows the existence of a hyperbolic limit cycle, can one
analytically prove the existence of such limit cycle? In section
\ref{sect2} we describe a method which answers this question in an
affirmative way.

\smallskip

The following theorem is the main result of this paper and it gives
the theoretical basis of the method described in section
\ref{sect2}. We prove:

\begin{theorem} \label{th1}
Let $\Gamma=\{ (\gamma(t)\, :\,   t \in [0,T] \}$ be a $T$-periodic hyperbolic limit cycle
of~\eqref{eq1}, parameterized by the time $t$. Define
\[
\tilde z_\varepsilon(t)= \gamma(t)+\varepsilon \tilde{u}(t) (\gamma'(t))^\perp,\] where
\begin{equation} \label{defu} \tilde{u}(t)   =   \frac{1}{||\gamma'(t)||^2}
\exp\left\{ \int_{0}^{t}\operatorname{div} X\left(\gamma(s)\right) \,ds   -   \kappa\, t \right\}
\end{equation} and
 $ \kappa   = \frac kT=
\frac{1}{T} \int_{0}^{T}\operatorname{div}X(\gamma(t))\, dt.$ Then,  the curve
$\left\{\tilde{z}_\varepsilon(t)\, :\, t \in [0,T] \right\}$ is $T$-periodic and, for
$|\varepsilon|>0$ small enough, it is strictly transversal to the flow associated to system
\eqref{eq1}.
\end{theorem}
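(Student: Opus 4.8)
The plan is to verify the two claimed properties of the curve $\{\tilde z_\varepsilon(t)\}$ separately: first its $T$-periodicity, and then its strict transversality for small $\varepsilon$. For the periodicity, I would check that $\tilde u(t)(\gamma'(t))^\perp$ is itself $T$-periodic. Since $\gamma$ is $T$-periodic, $\gamma'(t)$ and $(\gamma'(t))^\perp$ are $T$-periodic, so it remains to show $\tilde u(t+T)=\tilde u(t)$. The factor $1/\|\gamma'(t)\|^2$ is $T$-periodic, so I would focus on the exponential. Writing $D(t)=\int_0^t\operatorname{div}X(\gamma(s))\,ds$, the exponent is $D(t)-\kappa t$. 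Using $D(t+T)=D(t)+\int_t^{t+T}\operatorname{div}X(\gamma(s))\,ds=D(t)+k$ (by periodicity of the integrand and the definition $k=\int_0^T\operatorname{div}X(\gamma)\,dt$), together with $\kappa(t+T)=\kappa t+\kappa T=\kappa t+k$, the increments cancel exactly, giving $D(t+T)-\kappa(t+T)=D(t)-\kappa t$. Hence $\tilde u$ is $T$-periodic and so is the whole curve.

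For transversality, I would compute the transversality function $X(\tilde z_\varepsilon(t))\cdot(\tilde z_\varepsilon'(t))^\perp$ and show it has constant sign for small $\varepsilon\neq 0$. The natural approach is to expand this scalar product in powers of $\varepsilon$. At $\varepsilon=0$ the curve reduces to $\Gamma$ itself, on which $X(\gamma(t))$ is parallel to $\gamma'(t)$, so the $\varepsilon^0$ term vanishes identically; this is the degeneracy that forces us to extract the $\varepsilon^1$ coefficient. I would therefore differentiate the scalar product with respect to $\varepsilon$ at $\varepsilon=0$ and show that this first-order coefficient is a nonvanishing function of $t$ of constant sign. Computing $\tilde z_\varepsilon'(t)=\gamma'(t)+\varepsilon\frac{d}{dt}\bigl(\tilde u(t)(\gamma'(t))^\perp\bigr)$ and Taylor-expanding $X(\tilde z_\varepsilon(t))$ about $\gamma(t)$, the linear-in-$\varepsilon$ term involves the derivative $DX(\gamma(t))$ applied to $\tilde u(t)(\gamma'(t))^\perp$ together with the perturbation of the tangent vector.

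The key computation, which I expect to be the main obstacle, is to verify that the first-order coefficient simplifies to an expression of fixed sign. The role of the precise choice of $\tilde u(t)$ in \eqref{defu} is exactly to make this happen: the exponential factor $\exp\{D(t)-\kappa t\}$ is an integrating factor designed so that, after using the identity $\operatorname{div}X=\frac{\p P}{\p x}+\frac{\p Q}{\p y}$ and the relation between $DX(\gamma)\gamma'$ and $\gamma''$ along the orbit, the first-order term collapses to something proportional to $\exp\{D(t)-\kappa t\}$, which never vanishes. I would carry out this algebra using the Frenet-type decomposition of $X$ along $\Gamma$ (tangent component equal to $\|\gamma'\|$ in the natural parametrization and normal component zero) and the hyperbolicity hypothesis $k\neq 0$, which guarantees the surviving constant-sign factor is genuinely nonzero. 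Once the $\varepsilon^1$ coefficient is shown to be of constant sign and bounded away from zero on the compact interval $[0,T]$, continuity in $\varepsilon$ gives that the full transversality function keeps that sign for all sufficiently small $|\varepsilon|>0$, and since it never vanishes the curve is strictly transversal, completing the proof. $\bbox$
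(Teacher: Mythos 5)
Your periodicity argument is complete and is essentially the paper's (the paper packages it as the observation that $H(t)=\int_0^t h(s)\,ds-\frac{t}{T}\int_0^T h(s)\,ds$ is $T$-periodic for $T$-periodic $h$; your direct cancellation $D(t+T)-\kappa(t+T)=D(t)-\kappa t$ is the same fact). The transversality half, however, contains a genuine gap: you set up the correct first-order expansion, correctly note that the $\varepsilon^0$ term vanishes because $\gamma'\cdot\gamma'^\perp\equiv 0$, and correctly \emph{predict} that the coefficient of $\varepsilon$ collapses to a positive multiple of $\kappa\,\mathcal{E}(t)$ with $\mathcal{E}(t)=\exp\{D(t)-\kappa t\}>0$ --- but you never verify it, deferring it as ``the main obstacle.'' That verification \emph{is} the theorem: the statement is false for a generic choice of $\tilde u$, and nothing you list forces the collapse. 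Concretely, writing $\tau$ for the $\varepsilon$-coefficient one finds
\[
\tau=-\tilde u'\,\gamma'\cdot\gamma'-\tilde u\,\gamma'\cdot\gamma''+\tilde u\,DX(\gamma)\gamma'^\perp\cdot\gamma'^\perp,
\]
and after substituting $\tilde u'=\left(\operatorname{div}X(\gamma)-\kappa-2\frac{\gamma'\cdot\gamma''}{\gamma'\cdot\gamma'}\right)\tilde u$ and $\gamma''=DX(\gamma)\gamma'$ one is left holding the sum $DX(\gamma)\gamma'\cdot\gamma'+DX(\gamma)\gamma'^\perp\cdot\gamma'^\perp$. The missing idea --- the linchpin of the paper's proof --- is the identity
\[
(Av)\cdot v+(Av^\perp)\cdot v^\perp=\operatorname{trace}(A)\,v\cdot v
\]
for any $2\times 2$ matrix $A$ and vector $v$, which converts that sum into $\operatorname{div}X(\gamma)\,\gamma'\cdot\gamma'$ and makes everything cancel except $\kappa\,\tilde u\,\gamma'\cdot\gamma'=\kappa\,\mathcal{E}$. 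Merely invoking $\operatorname{div}X=\frac{\partial P}{\partial x}+\frac{\partial Q}{\partial y}$, as you propose, does not supply this step, and the ``Frenet-type decomposition'' you mention is a red herring: since $\gamma$ is parametrized by time, $X(\gamma(t))=\gamma'(t)$ exactly, so there is no normal component to decompose and the tangential component is $\|\gamma'\|$ only after reparametrizing, which you never need.

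The surrounding logic of your proposal is sound and matches the paper's: once $\tau=\kappa\,\mathcal{E}$ is established, hyperbolicity gives $\kappa\neq 0$, positivity of $\mathcal{E}$ on the compact interval $[0,T]$ bounds $|\tau|$ away from zero, and uniformity of the $\mathcal{O}(\varepsilon^2)$ remainder (which holds by smoothness and compactness, and deserves a sentence) yields strict transversality for all sufficiently small $|\varepsilon|>0$. So what you have is the paper's strategy with its central computation left as a conjecture; to complete the proof you must carry out the algebra above, and the trace identity is the cleanest way to do it.
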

The proof of this result is given in section \ref{sectp}. Note that
in its statement $\tilde{u}(t)>0$ for all $t$ and  $\kappa \neq 0$
because $\Gamma$ is hyperbolic.

Notice that as a consequence of the above result, the curve $\tilde{z}_\varepsilon(t)$ is a
transversal oval close to the limit cycle $\Gamma$ for $|\varepsilon|>0$ small enough, which is
inside or outside it depending on the sign of $\varepsilon$.

As an illustration of the effectiveness of our approach we apply it to
locate the limit cycles in two celebrated planar differential
systems, the van der Pol oscillator and the Brusselator system, see
sections~\ref{vdp} and~\ref{bru}, respectively. As we will see, the
van der Pol limit cycle  is ``easier" to be treated than the one of
the Brusselator system. In section~\ref{nova} we give an
explanation for the different level of difficulty for studying both limit
cycles. We prove that the different level of difficulty is hidden in the
sizes of the respective Fourier coefficients of the two limit
cycles, see Theorem~\ref{fita}. This theorem also shows that our
approach for detecting strictly transversal closed curves always works in
finitely many steps.

Finally, to show the applicability of the method to detect
bifurcation values, we use it  to find a sharp interval for  the
bifurcation value for a saddle-node bifurcation of limit cycles for
the Rychkov system. Recall that a saddle-node  bifurcation of limit cycles occurs when a stable limit cycle and an unstable limit cycle coalesce and become a double semi-stable limit cycle. A  saddle-node  bifurcation of limit cycles corresponds to an elementary catastrophe of fold type.

In 1975 Rychkov(\cite{Rychkov75}) proved that the system
\[ \dot{x} \, = \, y-\left( x^5-\mu x^3 + \delta x \right),
\quad \dot{y} \, = \, -x, \] with $\delta, \mu \in \mathbb{R},$ has
at most 2 limit cycles. Moreover, it is known that it has 2 limit
cycles if and only if $\delta>0$ and $0<\delta<\Delta (\mu),$ for
some unknown function $\Delta .$ For the value $\delta=\Delta (\mu)$
the system has a double limit cycle and, varying $\delta$, it
presents a saddle-node bifurcation of limit cycles. This system is
also studied by Alsholm(\cite{Alsholm92}) and Odani(\cite{Odani96}).
In particular Odani proved that $\Delta (\mu)>{\mu^2}/5.$

We believe that it is an interesting challenge to develop methods
for finding sharp estimations of $\Delta (\mu)$. Here we will fix
our attention on $\delta^*:=\Delta (1).$ Notice that Odani's result
implies that $\delta^*>1/5=0.2.$  We prove:

\begin{theorem}\label{rych} Let $\delta=\delta^*$ be the value for which the Rychkov system
\begin{equation}\label{rych1} \dot{x} \, = \, y-\left( x^5- x^3 + \delta x \right), \quad
\dot{y} \, = \, -x \end{equation}  has a semi-stable limit cycle.
Then $0.224\,{<}\,\delta^*\,{<}\,0.2249654.$
\end{theorem}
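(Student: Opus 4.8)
The plan is to bracket $\delta^*$ from both sides by producing, at two carefully chosen parameter values, configurations that force the saddle-node bifurcation value to lie strictly between them. Since Rychkov's theorem guarantees at most two limit cycles and tells us that two coexist precisely when $0<\delta<\Delta(1)=\delta^*$, the strategy reduces to two separate existence/non-existence verdicts: at a value $\delta_- = 0.224$ I must exhibit \emph{two} limit cycles (forcing $\delta^* > \delta_-$), and at a value $\delta_+ = 0.2249654$ I must show the system has \emph{no} limit cycle, or at least fails to have two (forcing $\delta^* < \delta_+$). The machinery of the paper, namely Theorem~\ref{th1} together with the Poincar\'e--Bendixson annular Criterion (Theorem~\ref{thpb}), is exactly what delivers the lower bound.

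For the lower bound $\delta^* > 0.224$ I would first numerically integrate \eqref{rych1} at $\delta = 0.224$ to detect the two limit cycles $\Gamma_{\mathrm{in}}$ (unstable, inner) and $\Gamma_{\mathrm{out}}$ (stable, outer), check that each is hyperbolic by confirming $k = \int_0^T \operatorname{div} X \, dt \neq 0$, and then apply Theorem~\ref{th1} to each. For each limit cycle this produces, via the explicit formula \eqref{defu}, a one-parameter family of strictly transversal closed curves $\tilde z_\varepsilon(t)$ sitting just inside or outside. Choosing signs of $\varepsilon$ appropriately I obtain an inner transversal oval inside $\Gamma_{\mathrm{in}}$ and an outer transversal oval outside $\Gamma_{\mathrm{out}}$, with the flow crossing both toward the annular region between them; since the only critical point of \eqref{rych1} is the origin (from $\dot y = -x = 0$ and then $y = x^5 - x^3 + \delta x = 0$), I must ensure the inner oval already encloses the origin so that the annulus is critical-point-free, whence Theorem~\ref{thpb} certifies an odd number of limit cycles inside, and combined with Rychkov's upper bound of two this pins the count and establishes coexistence at $\delta_-$. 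The transversality inequalities produced this way are polynomial (or trigonometric-polynomial) in the curve parameter and can be verified rigorously, either by a resultant/Sturm argument or by interval arithmetic.

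For the upper bound $\delta^* < 0.2249654$ the logic is reversed and this is where the main obstacle lies. Here I want to certify that at $\delta_+$ the two cycles have \emph{already annihilated}, i.e. that no limit cycle survives; a clean way is to construct a single strictly transversal closed curve that the flow crosses inward everywhere and whose interior (minus the origin) is negatively invariant, or to find a transversal curve separating the would-be cycle region so that the existence of two nested periodic orbits is excluded. The delicacy is that $\delta_+$ is extremely close to the bifurcation value, so the two cycles are nearly coincident and the transversality margins shrink to zero as $\delta \to \delta^*$; the explicit curve $\tilde z_\varepsilon$ built from one cycle will be barely transversal, and controlling the sign of $X(z(s)) \cdot (z'(s))^\perp$ uniformly demands sharp error estimates. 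I expect the hardest step to be making these near-degenerate transversality estimates fully rigorous at $\delta_+$, which will likely require combining the construction of Theorem~\ref{th1} with an auxiliary comparison argument (in the spirit of Odani's or Alsholm's use of Dulac-type functions for the Rychkov family) to rule out periodic orbits, rather than the annular criterion alone.

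A subtle point throughout is that Theorem~\ref{th1} presupposes knowledge of the exact periodic orbit $\gamma(t)$ in order to evaluate $\tilde u(t)$ via \eqref{defu}, whereas in practice only a numerical approximation of $\gamma$ is available; so each verdict must be upgraded from ``the transversal curve built from the true cycle works'' to ``a curve built from an explicit, rigorously-controlled approximation works.'' I would handle this by replacing $\gamma$ with a truncated Fourier (or piecewise-polynomial) approximation, bounding the resulting defect in the scalar product $P\, y' - Q\, x'$, and checking that this defect is dominated by the strict-transversality margin guaranteed by Theorem~\ref{th1}; the finiteness guarantee of Theorem~\ref{fita} reassures me that finitely many Fourier modes suffice, and the two numerical targets $0.224$ and $0.2249654$ are presumably chosen with enough slack to absorb the approximation error on the loose side while still being sharp.
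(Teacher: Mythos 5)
Your lower-bound configuration at $\delta=0.224$ cannot exist as described, and would not suffice even if it did. The inner cycle $\Gamma_{\mathrm{in}}$ is unstable and the origin is a stable focus, so every trajectory just inside $\Gamma_{\mathrm{in}}$ moves toward the origin: any oval produced by Theorem~\ref{th1} near an unstable cycle is crossed by the flow \emph{away} from that cycle on either side, so your claim that the flow crosses both ovals ``toward the annular region between them'' fails on the inner boundary, and the hypothesis of Theorem~\ref{thpb} is violated. Moreover your parity step is a non sequitur: an odd number of limit cycles (with multiplicity) that is at most two equals \emph{one}, so even a valid annulus containing both cycles could never certify coexistence. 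The paper instead constructs \emph{three} transversal curves at $\delta=0.224$ --- an interior oval inside the unstable cycle, a middle oval between the two cycles (both built from the unstable cycle with $\varepsilon$ of opposite signs, $\varepsilon=0.1$, $m=7$ and $\varepsilon=-0.008$, $m=10$), and an exterior oval outside the stable cycle ($\varepsilon=-0.05$, $m=7$) --- yielding two disjoint annuli: the outer one is a Poincar\'e--Bendixson region in forward time for the stable cycle, the inner one in reversed time for the unstable cycle, giving one cycle in each annulus and hence two. Your separate worry about rigor when $\gamma$ is only known numerically is moot in the paper's scheme: the final transversality check is a purely algebraic resultant/Sturm computation on an explicit curve with rational coefficients, so no error control on the numerical $\gamma$ is ever needed.

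The upper bound is where your proposal genuinely has no proof. At $\delta=0.2249654>\delta^*$ the system has \emph{no} limit cycle at all, so Theorem~\ref{th1} has no $\gamma$ from which to build $\tilde z_\varepsilon$ --- the difficulty is not ``near-degenerate transversality margins'' but outright inapplicability --- and a single inward-crossing transversal oval can never exclude periodic orbits inside it (transversal closed curves, via Theorem~\ref{thpb}, only ever prove existence). Your gesture toward a ``Dulac-type comparison in the spirit of Odani or Alsholm'' points at the right family of tools but supplies no construction. The paper's actual argument is Cherkas' method (Proposition~\ref{prop9}): for the Li\'enard system \eqref{rych1} there is a unique polynomial $B_n(x,y)=\sum_{i=0}^{n}B_i(x)y^i$ with $B_n(0,y)=y^n$ whose derivative along the flow is $\dot B_n=x^nR_{4(n-1)}(x,\delta)$, independent of $y$; for $n=300$ one verifies by Sturm sequences that the even degree-$1196$ polynomial $R_{1196}(x,0.2249654)$, with rational coefficients, is negative for all real $x$, so $\dot B_n\le 0$ vanishes only on the non-invariant line $x=0$, Lemma~\ref{dot} rules out all periodic orbits, and the rotated-vector-field monotonicity in $\delta$ (Duff, Perko) converts non-existence at $0.2249654$ into $\delta^*<0.2249654$ (the same monotonicity, which you invoke only implicitly through the known characterization of $\Delta(\mu)$, also underlies the lower bound). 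Without some such global auxiliary function, your plan cannot close the upper half of the theorem.
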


The lower bound for $\delta^*$  is proved by using the tools
introduced in this work. The upper bound is proved by constructing a
polynomial function in $(x,y)$ of very high degree such that its total
derivative with respect to the vector field does not change sign.
This method is proposed and already developed for general classical
Li\'{e}nard systems by Cherkas(\cite{Cher}) and also by
Giacomini-Neukirch (\cite{GiaNeu97,GiaNeu98}).

\section{Proof of Theorem \ref{th1} and a corollary\label{sectp}}\label{sec2}

\begin{proof}[Proof of Theorem {\rm \ref{th1}}]
To prove that the curve $\tilde{z}_\varepsilon(t)$ is  $T$-periodic simply notice that $\gamma(t)$
is  $T$-periodic and that the function $\tilde{u}(t)$ is $T$-periodic as well, due to its
definition (\ref{defu}), because for any real integrable $T$-periodic function $h$, the new
function
\[
H(t)= \int_0^t h(s)\,ds- \frac t T\int_0^T h(s)\,ds
\]
is also $T$-periodic.

Now, we show that the curve $\tilde{z}_\varepsilon(t)$,  for $|\varepsilon|>0$ small enough,  is
strictly transversal to system \eqref{eq1}. This follows once we prove  that
\begin{equation}\label{objectiu}
X(\tilde z_\varepsilon(t))\cdot ( \tilde z_\varepsilon'(t))^{\perp} =\kappa \mathcal{E}(t)
\,\varepsilon +  \mathcal{O}(\varepsilon^2),
\end{equation}
where we have introduced, to simplify notation,
\[ \mathcal{E}(t)   :=   \exp\left\{ \int_{0}^{t}
\operatorname{div} X\left(\gamma(s)\right) \,ds   -   \kappa\, t \right\}>0. \]

Let us prove~\eqref{objectiu}. We drop the dependence on $t$ to simplify notation. Since $\tilde
z_\varepsilon=\gamma+\varepsilon \tilde u \gamma'^\perp,$ we have that $\tilde
z_\varepsilon^\perp=\gamma^\perp-\varepsilon \tilde u \gamma'$. Then
\begin{align*}
X(\tilde z_\varepsilon)&= X(\gamma+\varepsilon \tilde u \gamma'^\perp)=X(\gamma)+ \varepsilon
\tilde u DX(\gamma)\gamma'^\perp+\mathcal{O}(\varepsilon^2)\\ &=\gamma'+ \varepsilon \tilde u
DX(\gamma)\gamma'^\perp+\mathcal{O}(\varepsilon^2),\\
\tilde z_\varepsilon'^\perp &=\gamma'^\perp-\varepsilon \tilde u' \gamma'-\varepsilon \tilde
u\gamma''.
\end{align*}
Hence $X(\tilde z_\varepsilon)\cdot \tilde z_\varepsilon'^\perp =
\gamma'\cdot\gamma'^\perp+\tau\varepsilon  +\mathcal{O}(\varepsilon^2)= \tau\varepsilon
+\mathcal{O}(\varepsilon^2),$ where
\[\tau=-\tilde u' \gamma'\cdot \gamma' - \tilde u\gamma'\cdot
\gamma''+ \tilde u DX(\gamma)\gamma'^\perp\cdot \gamma'^\perp.\]
 To simplify $\tau$, note that
\begin{align*}
\mathcal{E}' &=\left( \operatorname{div} X\left(\gamma\right)-\kappa \right)\mathcal{E},\quad
\tilde{u}   =   \frac{\mathcal{E}}{||\gamma'||^2}= \frac{\mathcal{E}}{\gamma'\cdot\gamma'},\\
 \tilde{u}'  &=\left( \operatorname{div} X\left(\gamma\right)-\kappa
-2\frac{\gamma'\cdot\gamma''}{\gamma'\cdot\gamma'} \right)\tilde{u}.
\end{align*}
Therefore,
\begin{align*}
\tau=& \Big( \big(\kappa- \operatorname{div}
X\left(\gamma\right)\big)\gamma'\cdot\gamma'+2\gamma'\cdot\gamma'' -\gamma'\cdot \gamma''+
DX(\gamma)\gamma'^\perp\cdot \gamma'^\perp\Big)\tilde u.
\end{align*}
Using that $\gamma'=X(\gamma)$ we have that $\gamma''=DX(\gamma)\gamma'.$ Thus,
\begin{align*}\tau=& \Big( \big(\kappa- \operatorname{div}
X\left(\gamma\right)\big)\gamma'\cdot\gamma'+ DX(\gamma)\gamma'\cdot \gamma'+
DX(\gamma)\gamma'^\perp\cdot \gamma'^\perp\Big)\tilde u.
\end{align*}
Finally, we use the following simple and nice formula
\[
\big(A v\big) \cdot v+ \big(A v^\perp)\cdot v^\perp= \operatorname{trace}(A)\,v\cdot v,
\]
where $A$ is a $2\times2$ matrix and $v$ is a vector. Hence,
\begin{align*}\tau=& \Big( \big(\kappa- \operatorname{div}
X\left(\gamma\right)\big)\gamma'\cdot\gamma'+ \operatorname{div} X(\gamma) \gamma'\cdot\gamma'
\Big)\tilde u= \kappa\,\tilde u \gamma'\cdot\gamma'=\kappa\, \mathcal{E},
\end{align*}
as we wanted to prove.
\end{proof}

\smallskip

In fact, in the above proof,  to show the transversality it is not used the specific value of
$\kappa$. We only have used that it is a nonzero constant. Hence, the following result holds:

\begin{corollary}
Given an orbit $\left\{ \gamma(t)  :  t \in (0,t_1) \right\}$ of system {\rm (1)}, parameterized
by the time $t$, and a nonzero constant $K$, then for $|\varepsilon|>0$ small enough,  the curve
\[
\hat z_{K,\varepsilon}(t)\, = \, \gamma(t) \, + \, \varepsilon \hat{u}_K(t) (\gamma'(t))^\perp,
\]
 where
\begin{equation*}  \hat{u}_K(t)   =   \frac{1}{||\gamma'(t)||^2}
\exp\left\{ \int_{0}^{s}\operatorname{div} X\left(\gamma(s)\right) \,ds   -   K t,\right\}
\end{equation*}
 is strictly transversal to the flow given by system \eqref{eq1}.
\end{corollary}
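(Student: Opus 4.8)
The plan is to observe that the corollary is a direct generalization of Theorem~\ref{th1}, obtained by stripping away the one place where the specific value $\kappa=k/T$ was used. Looking back at the proof of Theorem~\ref{th1}, the constant $\kappa$ entered in exactly two roles: it appeared in the exponent defining $\tilde u(t)$, and it appeared through the relation $\mathcal{E}'=(\operatorname{div}X(\gamma)-\kappa)\mathcal{E}$. The value $\kappa=k/T$ was chosen solely to make the resulting curve $T$-periodic (so that $\tilde z_\varepsilon$ closes up into an oval). Since the corollary only asks for transversality along an orbit arc over $t\in(0,t_1)$, with no periodicity requirement, I expect that any nonzero constant $K$ will serve equally well.

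First I would set $\hat{\mathcal{E}}(t):=\exp\{\int_0^t\operatorname{div}X(\gamma(s))\,ds-Kt\}$ so that $\hat u_K=\hat{\mathcal{E}}/\|\gamma'\|^2$, exactly mirroring the definition of $\mathcal{E}$ in the proof of Theorem~\ref{th1} but with $K$ in place of $\kappa$. Then I would record the two differential identities $\hat{\mathcal{E}}'=(\operatorname{div}X(\gamma)-K)\hat{\mathcal{E}}$ and $\hat u_K'=(\operatorname{div}X(\gamma)-K-2\,\gamma'\!\cdot\!\gamma''/(\gamma'\!\cdot\!\gamma'))\hat u_K$, which are obtained by the same elementary differentiation as before. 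Next I would repeat the expansion $X(\hat z_{K,\varepsilon})\cdot(\hat z_{K,\varepsilon}')^\perp=\hat\tau\,\varepsilon+\mathcal{O}(\varepsilon^2)$ verbatim, and compute $\hat\tau$ using $\gamma'=X(\gamma)$, hence $\gamma''=DX(\gamma)\gamma'$, together with the trace identity $(Av)\cdot v+(Av^\perp)\cdot v^\perp=\operatorname{trace}(A)\,v\cdot v$. The divergence terms cancel in precisely the same way, leaving $\hat\tau=K\,\hat u_K\,\gamma'\!\cdot\!\gamma'=K\,\hat{\mathcal{E}}$.

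Thus the conclusion reads $X(\hat z_{K,\varepsilon})\cdot(\hat z_{K,\varepsilon}')^\perp=K\,\hat{\mathcal{E}}(t)\,\varepsilon+\mathcal{O}(\varepsilon^2)$, with $\hat{\mathcal{E}}(t)>0$ for all $t$. Because $K\ne0$ and $\hat{\mathcal{E}}$ never vanishes, the leading term in $\varepsilon$ is of constant sign and bounded away from zero on any compact subinterval of $(0,t_1)$; hence for $|\varepsilon|>0$ small enough the scalar product does not vanish, which is exactly strict transversality. The honest way to present this is simply to remark that the entire computation of Theorem~\ref{th1} goes through \emph{mutatis mutandis}, with $K$ replacing $\kappa$ throughout, since at no point did the cancellation leading to $\tau=\kappa\mathcal{E}$ rely on $\kappa$ equaling $k/T$—only on its being a constant.

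I do not anticipate a genuine obstacle here, since the corollary is weaker than the theorem in the sense that it drops the periodicity claim, which was the only part that exploited the specific value of $\kappa$. The one point requiring a little care, if one wants strict transversality uniformly on the whole open interval $(0,t_1)$ rather than merely on compact subintervals, is the uniformity of the smallness of $\varepsilon$: the $\mathcal{O}(\varepsilon^2)$ remainder and the factor $\hat{\mathcal{E}}(t)$ must be controlled near the endpoints, which is harmless on any closed subinterval but should be noted if $t_1=\infty$ or if $\hat{\mathcal{E}}$ degenerates. For the stated finite-arc formulation this is a non-issue, so the proof is essentially a one-line appeal to the preceding argument.
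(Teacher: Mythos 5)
Your proof is correct and takes essentially the same route as the paper's: the paper likewise remarks that the computation in Theorem~\ref{th1} never used the specific value $\kappa=k/T$ (only that it is a nonzero constant, needed solely for $T$-periodicity) and concludes $X(\hat z_{K,\varepsilon}(t))\cdot(\hat z_{K,\varepsilon}'(t))^{\perp}=K\,\mathcal{E}_K(t)\,\varepsilon+\mathcal{O}(\varepsilon^2)$ with $\mathcal{E}_K(t)>0$, so the sign of $K\varepsilon$ fixes the crossing direction for small $|\varepsilon|$. Your closing caveat about uniformity of the admissible $\varepsilon$ on compact subintervals of $(0,t_1)$ is a reasonable point that the paper leaves implicit.
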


The proof goes as in the proof of Theorem \ref{th1}, showing that
\[
X(\hat u_{K,\varepsilon}(t))\cdot ( \hat u_{K,\varepsilon}'(t))^{\perp}= K \mathcal{E}_K(t)
\varepsilon + \mathcal{O}(\varepsilon^2),
\]
where
\[ \mathcal{E}_K(t)   :=   \exp\left\{ \int_{0}^{t}
\operatorname{div} X\left(\gamma(s)\right) \,ds   -   K t \right\}>0. \] Hence, for
$|\varepsilon|>0$ small enough, the sign of $K\,\varepsilon$ determines  how the flow of system
\eqref{eq1} crosses the piece of curve $\hat z_{K,\varepsilon}.$

This corollary can be useful to construct curves without contact to a piece $\Gamma$ of solution
of \eqref{eq1},
not closed, which are ``parallel" to it and such the flow crosses them either
towards $\Gamma$ or in the opposite direction, as desired.

\section{Description of the method\label{sect2}}

\subsection{First step: the ``numerical'' limit cycle}

Assume that system (\ref{eq1}) has a hyperbolic limit cycle $\Gamma$. To simplify the notation,
we also assume that  the segment $\Sigma   := \{
(x_0,0)   :   \alpha < x_0 < \beta \}$ is a transversal section, $0 \leq \alpha < \beta$. Given a
point $(x_0,0) \in \Sigma$, we can numerically compute the
solution $\varphi(t;x_0)$ with initial condition $\varphi(0;x_0)   =   (x_0,0).$ We denote the scalar components of the function $\varphi \, = \, (\varphi_1, \varphi_2)$.

 We can also numerically compute the value $T(x_0)>0$ for
which
\[ \varphi(T(x_0);x_0) \in \Sigma \] and $T(x_0)$ is the lowest one with
this property.

We look for a zero of the {\em displacement map}
\[ \mathcal{P}(x_0)-x_0=\varphi_1(T(x_0);x_0)    -   x_0, \] and we can find the zero $x_0^*$
of this map with as much precision as the computer allows. In this way,
we have {\em numerically} computed the limit cycle $\left\{ \varphi(t;x_0^*)
   :   t \in [0,T(x_0^*)] \right\}$ and its period $T(x_0^{*})$.

{F}rom now on, even though we do not have analytic but numerical
expressions, we denote the limit cycle by $\gamma(t)$ and its period
by $T$.

\subsection{Second step:  the {\em numerical} transversal curve}

We can numerically compute
\[ \kappa   =   \frac{1}{T} \int_{0}^{T}\operatorname{div} X \left(\gamma(t)\right) \,dt \]
and a tabulation of the function $\tilde{u}(t)$ given in~\eqref{defu}. As we will see,
we even do not need to care about
the method used to get this approximation (for instance it can be spline interpolation)
because from a point on,
our method starts again and only does analytic computations.

\smallskip

Next, we fix a value of  $\varepsilon$ and we construct
\[
\tilde z_\varepsilon(t) \, =\,  \gamma(t)\, + \, \varepsilon \tilde u (t) \gamma'(t)^\perp.
\]
 We
numerically check whether the above curve
is transversal to system {\rm (\ref{eq1})}. If not, we take a smaller
value of $|\varepsilon|$.

\smallskip

We take an odd natural number $n$ and, from these computations, we get a list of $n$
points of the curve $\tilde z_\varepsilon(t)$. For instance these points are the ones
corresponding
to times $t=i/T, i=0,1,\ldots n-1.$

\subsection{Third step: a first explicit transversal  curve}

{F}rom the previous step we have a list of $n$ points of the curve
$\tilde{z}_\varepsilon(t)$. Since we have chosen $n$ odd, we define
$m=(n-1)/2$. We consider the expressions
\begin{equation}\label{rat} \begin{array}{lll} \tilde{w}_1^{(m)}(\theta) & = & \displaystyle
\tilde{c}_{0,0}
   +   \sum_{i=1}^{m} \tilde{c}_{i,0} \cos(i \theta) + \tilde{c}_{0,i}
 \sin(i \theta), \vspace{0.2cm} \\
\tilde{w}_2^{(m)}(\theta) & = & \displaystyle \tilde{d}_{0,0}   + \sum_{i=1}^{m} \tilde{d}_{i,0}
\cos(i \theta) + \tilde{d}_{0,i} \sin(i \theta), \end{array} \end{equation} with undefined
coefficients $\tilde{c}_{i,j}, \tilde{d}_{i,j}$. We have $2(2m+1)$ unknowns.

\smallskip

We impose that the curve $\tilde{w}^{(m)}(\theta)=(\tilde w_1^{(m)}(\theta),\tilde w_2^{(m)}(\theta))$
 passes through the list of $n$ points when $\theta   =   2
 \pi   i /n$ for $i=0,1,2,\ldots, n-1$. We also have $2n=2(2m+1)$ conditions.

\smallskip

We obtain a curve $\left\{\tilde{w}^{(m)}(\theta)   :   \theta \in [0,2 \pi] \right\},$ which
approximates the numerical transversal curve $\left\{\tilde{z}_\varepsilon(t)  :   t \in [0,T]
\right\}.$

\subsection{Fourth step: a curve with rational coefficients}

We take rational approximations of the coefficients in the
expressions of $\tilde{w}^{(m)}(\theta)$. These rational approximations are taken with a certain precision. In case this precision is not sharp enough, it can be sharpened after the fifth step. We obtain a new closed
curve $\left\{ w^{(m)}(\theta)   :   \theta \in [0,2\pi] \right\},$
whose coefficients are rational. That is,
\begin{equation}\label{curve}
\begin{array}{lll} w_1^{(m)}(\theta) & = & \displaystyle c_{0,0}   +
\sum_{i=1}^{m} c_{i,0} \cos(i \theta) + c_{0,i} \sin(i \theta),
\vspace{0.2cm} \\
w_2^{(m)}(\theta) & = & \displaystyle d_{0,0}   +   \sum_{i=1}^{m}
d_{i,0} \cos(i \theta) + d_{0,i} \sin(i \theta), \end{array}
\end{equation} where $c_{i,j}$ and $d_{i,j}$ are rational numbers which are
approximations of the corresponding $\tilde{c}_{i,j}$ and
$\tilde{d}_{i,j}$.

\subsection{Fifth step: a transversal curve}

We have constructed a closed curve $\{ w^{(m)}(\theta)  :   \theta \in [0,2\pi]\},$ whose
coefficients are rational. We know that this curve is transversal to system {\rm (\ref{eq1})} if
\[
f(\theta)= X(w^{(m)}(\theta))\cdot (w^{(m)})'(\theta)^\perp
\]
does not change sign for all $\theta \in [0,2\pi]$. To prove so, we expand $f(\theta)$ in powers
of $\cos \theta, \sin \theta$ and we change $\cos \theta$ by $u$ and $\sin \theta$ by $v$. Then we
take the resultant of this expression with $u^2+v^2-1$ with respect to $v$, see for
instance~\cite{st}. If this resultant, $R(u)$, which is a polynomial in $u$, has no real  roots
for $u \in [-1,1]$  we know that the first polynomial has no common real solutions with
$u^2+v^2=1$ and as a consequence $f(\theta)$ does not vanish. To prove that $R(u)$ has no real roots in $[-1,1]$ one can compute
for instance the Sturm sequence of $R$ and apply the Sturm theorem, see for instance \cite{sb}.

Recall that we have taken the coefficients of $w(\theta)$ rational. We remark  that if all  the coefficients of the vector field $X(x,y)$ which define the system {\rm (\ref{eq1})} are also rational, the computations needed to ensure that $f(\theta)$ does not change sign are much simpler.

\smallskip

If the obtained curve is not transversal, we take the rational approximations of the fourth step with a higher precision. Another option is to repeat from the third step in order to obtain a list
of $n=2m+1$ points of the curve $\tilde{w}^{(m)}(t)$ with a higher $n$.

\subsection{Sixth step: a Poincar\'e--Bendixson annular region}

We repeat the above five steps process with an $\varepsilon$ of
different sign in order to obtain an inner transversal curve and an
outer transversal curve to the limit cycle. In this way, we have a
Poincar\'e--Bendixson annular region which analytically shows the
existence of at least one limit cycle in its interior. We can take
smaller values of $\varepsilon$ which will make this region
narrower. Thus, we locate the limit cycle.

\section{Examples\label{sect3}}

We present a couple of examples for which a Poincar\'e--Bendixson region can be
constructed by using the method described above.

\subsection{Example 1: the van der Pol system\label{sect31}}\label{vdp}

We start with the celebrated van der Pol system
\begin{equation} \label{eqex1} \dot{x}   =   y-\varepsilon \left( \frac{x^3}{3}
-x \right), \quad \dot{y}   =   -x, \end{equation} with $\varepsilon >0$.

The origin is the only finite critical point of the system and it is
a repulsive point (a focus when $0<\varepsilon<2$ and a node when
$\varepsilon \geq 2$). It is known, see for instance \cite{Perko},
that system (\ref{eqex1}) has a unique stable and hyperbolic limit
cycle for all $\varepsilon>0$ which bifurcates from the circle of
radius $2$ when $\varepsilon = 0$ and which disappears into a
slow-fast periodic limit set when $\varepsilon \to +\infty$. The
semi-axis $\Sigma   := \{(x_0,0)
  :   x_0>0\}$ is a transversal section for the limit cycle.

\begin{figure}[htb]
\includegraphics[width=0.5\textwidth]{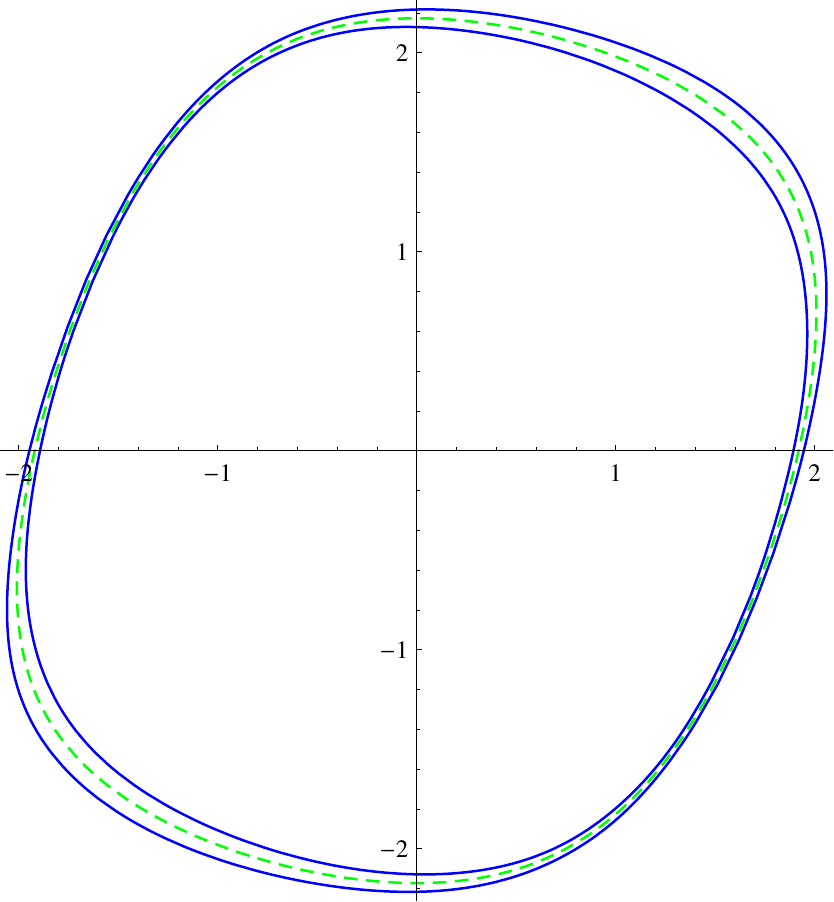}
\caption{The transversal curves are represented in blue and the (numerical) limit cycle in dashed green,
for the van der Pol system \eqref{eqex1} with $\varepsilon=1$.} \label{vdp-m12}
\end{figure}

We consider the van der Pol system with $\varepsilon   =   1.$
The limit cycle crosses the transversal section $\Sigma$ at $x_0^* \sim  1.91928$ and
it has period $T \sim 6.6632866$. We have numerically computed the limit cycle and from this approximation we have obtained the described values of $x_0^*$ and $T$.

By our method we obtain an inner transversal curve and an outer transversal curve
$ w_{\mbox{in}}(\theta)$ and $w_{\mbox{ex}}(\theta)$ with $\theta \in [0,2\pi]$,  which provide a
Poincar\'e--Bendixson
annular region. The inner transversal curve cuts $\Sigma$ at $\sim 1.89331$ and the
outer transversal curve at $\sim 1.94543$, see Figure \ref{vdp-m12}.

The inner transversal curve is obtained with $\varepsilon   =   0.05$ and $m=12$. By the numerical
computations, we obtain the following list of $n=2m+1  =   25$ points:
\[ \begin{array}{l}
\displaystyle \big\{ (1.89451,  0.0056435),   (1.76278, -0.488101),   (1.59066, -0.939363),
\\ \displaystyle (1.38198, -1.33813),  (1.12999, -1.67325),  (0.819552, -1.92987),
\\ \displaystyle (0.424859, -2.08912),  (-0.093381, -2.12507),  (-0.747354, -2.00013),
\\ \displaystyle (-1.39679, -1.69586),  (-1.80051, -1.27605),   (-1.9537, -0.788939),
\\ \displaystyle (-1.93903, -0.264387),   (-1.83453, 0.245845),   (-1.68122, 0.719683),
\\ \displaystyle (-1.49111, 1.14594),  (-1.26215, 1.51447),  (-0.98337, 1.81246),
\\ \displaystyle (-0.634949, 2.02302),  (-0.183705, 2.12466),   (0.40691, 2.08508),
\\ \displaystyle (1.08983, 1.86873),  (1.63579, 1.49426),   (1.90318, 1.04111),
\\ \displaystyle (1.96187,  0.527263) \big\}. \end{array} \]

These points are represented in Figure \ref{vdp-inner-curpoints}.
Applying our method, we find a curve of the form \eqref{rat} with
$m=12,$ which passes through these points, see Figure
\ref{vdp-inner-curpoints}.

\begin{figure}[htb]
\includegraphics[width=0.6\textwidth]{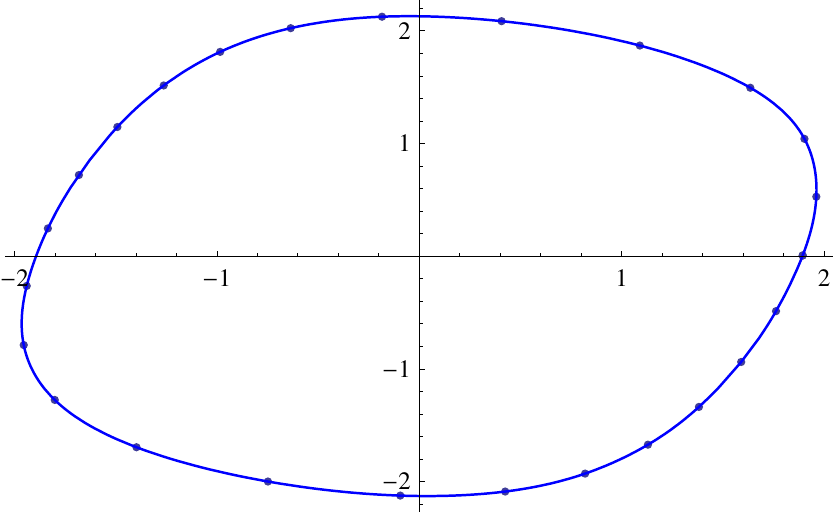}
\caption{Points of the transversal curve to the limit cycle and the approximated transversal curve
to the limit cycle of the van der Pol equation with $\varepsilon=1$, numerically computed.}
\label{vdp-inner-curpoints}
\end{figure}

For an a priori chosen precision we replace the coefficients in the above curve by rational
numbers. In this particular case we obtain:

\begin{align*} \displaystyle w_{1,\mbox{in}}(\theta)   = &  \frac{1}{213892} +
\frac{18566}{9395} \cos
   (\theta)+\frac{\cos (2 \theta)}{117817}-\frac{1973}{35647} \cos
   (3 \theta)
   \vspace{0.2cm} \\ \displaystyle
   &-\frac{\cos(4\theta)}{84836}-\frac{337}{9801}\cos (5\theta)
   -\frac{\cos (6\theta)}{19746}+\frac{53 }{5756}\cos (7\theta)
   \vspace{0.2cm} \\ \displaystyle
   & -\frac{\cos (8\theta)}{420042} - \frac{\cos (9\theta)}{4738}
   + \frac{3 \cos (10\theta)}{11954} - \frac{\cos(11 \theta)
   }{776}  + \frac{\cos (12\theta)}{5488}
   \vspace{0.2cm} \\ \displaystyle
   & +\frac{1097 }{13625}\sin(\theta) - \frac{\sin (2\theta)}{103485}
   - \frac{2003}{9487}\sin(3 \theta) - \frac{\sin (4\theta)}{46332}
   \vspace{0.2cm} \\ \displaystyle
   & +\frac{1317}{54185}\sin (5 \theta) +\frac{\sin (6 \theta)}{85313}
   +\frac{103}{24125} \sin (7\theta) +\frac{\sin (8 \theta)}{8809}
   \vspace{0.2cm} \\ \displaystyle
   & -\frac{29 }{8781} \sin (9\theta) +\frac{\sin (10\theta)}{18036}
   +\frac{3 \sin (11\theta)}{7760} - \frac{7 \sin (12 \theta)}{12512}, \end{align*}

\begin{align*}
\displaystyle w_{2,\mbox{in}}(\theta)   = &  -\frac{1}{287689}+\frac{1207 \cos(\theta)}{18761}
 +\frac{\cos
(2\theta)}{180371}-\frac{721 \cos (3\theta)}{11644}
   \vspace{0.2cm} \\ \displaystyle
&+\frac{\cos (4\theta)}{46468}+\frac{116 \cos (5\theta)}{18697}
-\frac{\cos (6\theta)}{85239}-\frac{27 \cos (7\theta)}{11035}
   \vspace{0.2cm} \\ \displaystyle
&-\frac{\cos (8\theta)}{9627}-\frac{13 \cos (9\theta)}{9450}
-\frac{\cos (10\theta)}{19827}+\frac{7 \cos (11\theta)}{12142}
   \vspace{0.2cm} \\ \displaystyle
&+\frac{\cos (12\theta)}{2425}-\frac{22778 \sin (\theta)}{10867}
+\frac{\sin (2 \theta)}{106711}+\frac{295 \sin (3\theta)}{14827}
   \vspace{0.2cm} \\ \displaystyle
&-\frac{\sin (4\theta)}{98567}+\frac{35 \sin (5\theta)}{25042}
-\frac{\sin (6\theta)}{20630}-\frac{21 \sin (7\theta)}{7234}+\frac{\sin (8\theta)}{3180308}
   \vspace{0.2cm} \\ \displaystyle
&+\frac{21 \sin (9\theta)}{14432}+\frac{2 \sin (10
   \theta)}{9397}+\frac{7 \sin (11\theta)}{9435}
+\frac{\sin (12\theta)}{5087}.
 \end{align*}

We know that the curve is transversal to the system if the trigonometric polynomial
\[ f(\theta)   :=    X(w_{\mbox{in}}(\theta))\cdot w_{\mbox{in}}'(\theta)^\perp \] does not
change sign for all $\theta \in [0,2\pi]$. Since the polynomial $P(x,y)$ in the system is of
degree $3$ and the components of $w_{\mbox{in}}(\theta)$  are of degree $12$, we have that the
trigonometric polynomial $f(\theta)$ is of degree $48$. As we explained in the description of the
method, we expand $f(\theta)$ in powers of $\cos \theta, \sin \theta$ and we change $\cos \theta$
by $u$ and $\sin \theta$ by $v$, in order to get a polynomial $\tilde{f}(u,v)$ which is of degree
$48$. Then we take the resultant of $\tilde{f}(u,v)$ with $u^2+v^2-1$ with respect to $v$. This
resultant is a polynomial in $u$ of degree $96$. Finally we prove that this polynomial  has no
real roots for $u \in [-1,1]$ by computing its Sturm's sequence.

\smallskip

To obtain the outer transversal curve, we choose $\varepsilon   =   -0.05$ and $m=12$ and we
repeat the process. See Figure \ref{vdp-m12} for a representation of the inner and the outer
transversal curves together with the limit cycle.

\subsection{Example 2: the Brusselator system\label{sect32}}\label{bru}

We consider the system
\begin{equation} \label{eqex2} \dot{x}   =   a - (b+1) x + x^2y, \quad
\dot{y}   =   bx-x^2y, \end{equation} with $a,b>0$. This system has
a unique singular point at $(a,b/a)$. The semi-axis $\Sigma   :=
\left\{(x_0,b/a)   :   x_0>a \right\}$ is transversal to the flow.
If we take $a=1$ and $b=3$, the system exhibits a hyperbolic stable
limit cycle which cuts $\Sigma$ at $x_0^{*} \sim 2.30354344$ and has
period $T \sim 7.15691986$. We have numerically computed the limit cycle and the values of $x_0^*$ and $T$ have been obtained from this approximation. By our method we obtain an inner
transversal curve and an outer transversal curve, $w_{\mbox{in}}(\theta)$ and $w_{\mbox{ex}}(\theta)$ with $\theta \in
[0,2\pi]$, which provide a Poincar\'e--Bendixson annular region. The inner transversal curve cuts $\Sigma$ at $\sim 2.2981$ and the outer transversal curve at $\sim 2.3091$, see Figure \ref{Bruss-m140}.

\begin{figure}[htb]
\includegraphics[width=0.5\textwidth]{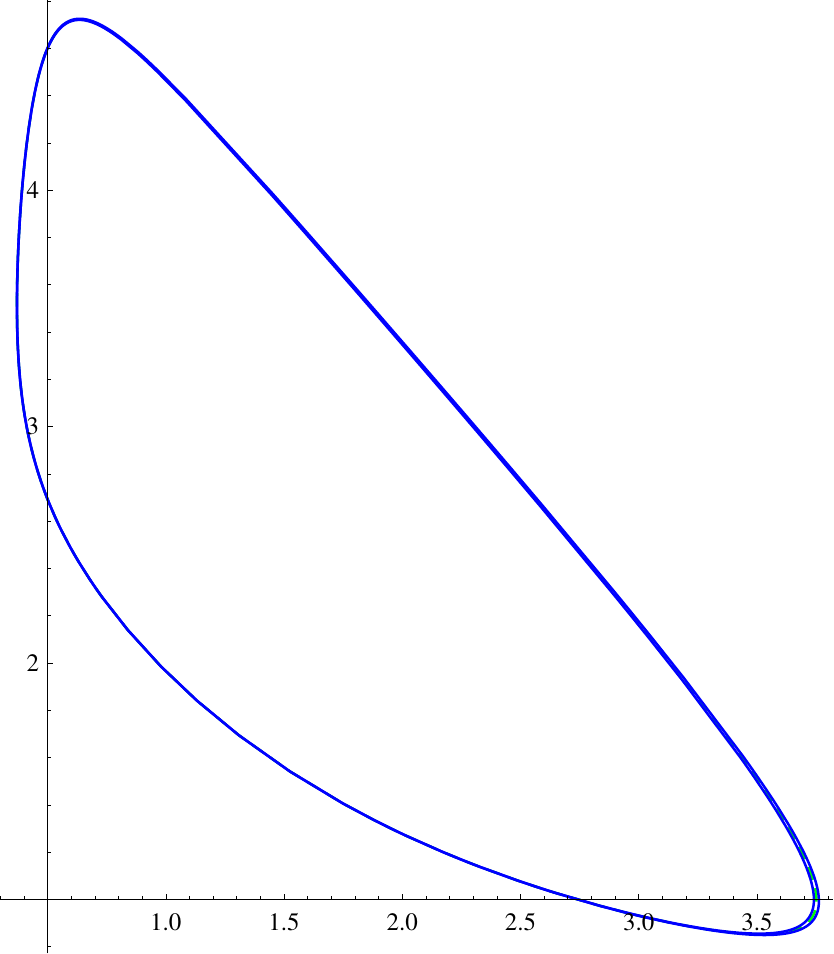}
\caption{The transversal curves are represented in blue and the (numerical) limit cycle in dashed green
for the Brusselator system with $a=1, b=3$. The three curves are almost indistinguishable.}
\label{Bruss-m140}
\end{figure}

The inner curve is obtained with $\varepsilon   =   0.05$ and the outer curve is obtained with
$\varepsilon   =   -0.05$, and both of them with $m=140$. We have not been able to find a
transversal curve with a lower value of $m$.

\smallskip

We also have considered system (\ref{eqex2}) with $a=1$ and when $b$
decreases. In this case the limit cycle shrinks until arriving to a weak
focus point when $b=2$ (Hopf bifurcation). We have studied the number of points ($2m+1$) needed to construct a transversal curve with our method giving an approximation of the limit cycle with similar accuracy. When $b=2.5$ with $\varepsilon=0.02$ we need to consider $m=55$ and when $b=2.2$ with $\varepsilon=0.007$ we need $m=30$.

\subsection{Comparison between the van der Pol and the Brusselator limit cycles}\label{nova}

Recall that by using our approach we  find closed transversal curves $\{w^{(m)}(\theta)\,:\,
\theta\in[0,2\pi]\}$ parameterized by the angle $\theta$ given by trigonometric polynomials of
degree $m$ with rational coefficients, see \eqref{rat}. In this section we convert these
curves into $T$-periodic ones simply by considering
\begin{equation}\label{www}
W^{(m)}(t)= w^{(m)}\left( \frac{2\pi}{T}t\right).
\end{equation}

As we have seen,  in the van der Pol system with $\varepsilon=1$, which is
\begin{equation} \label{vdpe1} \dot{x}   =   y-\left({x^3}/{3}-x\right),
\quad \dot{y}   =   -x, \end{equation} we can find a transversal
curve with $\varepsilon=0.05$ and $m=12$. On the other hand, for the
Brusselator system with $a=1$ and $b=3$, which is
\begin{equation} \label{Bruss13} \dot{x}   =   1-4x+x^2y, \quad
\dot{y}   =   3x-x^2y, \end{equation} we can find a transversal
curve taking $\varepsilon=0.05$ only with $m=140$ or higher. The aim
of this section is to understand why the number of points to be
taken, that is the value of $m$, is so different.

Before stating our main result we need to introduce some notations. If  $f$ is a $T$-periodic
continuous function,
\[ \|f\|_2   =   \sqrt{\frac{1}{T} \int_{0}^{T} f(s)^2   ds} \quad
\mbox{and} \quad \|f\|_{\infty}   =   \max\{|f(s)|  :   s \in [0,T]\} \] denote the $L_2$ and
$L_\infty$ norms, respectively. Notice  that $\|f\|_2 \leq \|f\|_\infty$. When $f$ is also a class
$\mathcal{C}^1$ function, its $\mathcal{C}^1$-norm is
\[ ||f||_{\mathcal{C}^1}= ||f||_\infty+||f'||_\infty. \]
 Similarly, for any of the three norms, when we
consider a $T$-periodic vector function $h(t)=(f(t),g(t))$,  we define $||h||=||f||+||g||.$

Finally, we denote by $\mathcal{F}_m (f)$ the Fourier polynomial of degree $m$ associated to $f$,
that is, \begin{equation}\label{trigo} \mathcal{F}_m (f)   =  \frac{ a_0}2   + \sum_{k=1}^{m}
a_k \cos\left(\frac{2\pi k}{T} t \right)   +    b_k \sin\left( \frac{2\pi k}{T} t
\right),\end{equation} where the constants $a_k,b_k, k=0,1,2,\ldots$
are
\begin{align*}
a_k=\frac 2T\int_0^T f(t) \cos\left(\frac{2\pi k }T t\right)\,dt,\quad b_k=\frac 2T\int_0^T f(t)
\sin\left(\frac{2\pi k }T t\right)\,dt.
\end{align*}
Similarly $\mathcal {F}_m(h)=(\mathcal{F}_m(f),\mathcal{F}_m(g)).$

We collect in the next proposition some well known results of
Fourier theory adapted to our interests, see for instance
\cite{Kor,Tol}. Some of the statements hold without our strong
hypotheses on $f.$

\begin{proposition}\label{propiet} Let $f$ be a $T$-periodic $\mathcal C^1$ function. The following holds:
\begin{enumerate}

\item[(i)] Let $p\ne \mathcal{F}_m(f)$ be any trigonometric polynomial of
degree $m$ (that is of the form~\eqref{trigo} with  arbitrary real coefficients). Then
\[
||f-\mathcal{F}_m(f)||_2<||f- p||_2.
\]

\item[(ii)] $\lim_{m\to\infty} ||f-\mathcal{F}_m(f)||_{\mathcal{C}^1}=0.$

\item[(iii)]  Plancherel's theorem:
\[
||f||_2^2=\frac{a_0^2}4+\frac 12 \sum_{k=1}^{\infty} \big(a_k^2 + b_k^2\big).
\]

\item[(iv)] A consequence of Plancherel's theorem:
\[
||f-\mathcal{F}_m(f)||_2^2=\frac 12 \sum_{k>m}^{\infty} \big(a_k^2 +
b_k^2\big)\ge \frac 12\big(a_{m+1}^2+b_{m+1}^2\big).
\]

\end{enumerate}

\end{proposition}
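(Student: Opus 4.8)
The plan is to treat parts (i), (iii), (iv) together as consequences of the Hilbert-space structure of $L_2$, and to handle (ii) separately, since it alone concerns uniform rather than mean-square convergence. First I would fix on the space of $T$-periodic functions the inner product $\langle f,g\rangle=\frac1T\int_0^T f(s)g(s)\,ds$, so that $\|\cdot\|_2$ is the associated norm, and record that the system $\{1,\cos(2\pi k t/T),\sin(2\pi k t/T)\}_{k\ge1}$ is orthogonal, with $\langle 1,1\rangle=1$ and $\langle\cos_k,\cos_k\rangle=\langle\sin_k,\sin_k\rangle=\tfrac12$. A direct check then shows that the coefficients $a_k,b_k$ in \eqref{trigo} are exactly the coordinates of the orthogonal projection of $f$ onto the span of the trigonometric functions of degree at most $m$; that is, $\mathcal F_m(f)$ is that projection.

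Granting this, (i) is the classical fact that the orthogonal projection is the unique nearest point: writing $f-p=(f-\mathcal F_m(f))+(\mathcal F_m(f)-p)$ with the two summands orthogonal (the first is orthogonal to the whole degree-$m$ span, the second lies in it), the Pythagorean identity gives $\|f-p\|_2^2=\|f-\mathcal F_m(f)\|_2^2+\|\mathcal F_m(f)-p\|_2^2$, and the last term is strictly positive precisely when $p\ne\mathcal F_m(f)$, the trigonometric system being linearly independent. For (iii) I would invoke completeness of the trigonometric system in $L_2$: Bessel's inequality gives $\tfrac{a_0^2}{4}+\tfrac12\sum(a_k^2+b_k^2)\le\|f\|_2^2$, while density of trigonometric polynomials in the continuous periodic functions (through Fej\'er's theorem or Stone--Weierstrass) upgrades this to Parseval's equality. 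Part (iv) is then immediate: the same orthogonal splitting with $p=0$ gives $\|f\|_2^2=\|\mathcal F_m(f)\|_2^2+\|f-\mathcal F_m(f)\|_2^2$, and subtracting the Parseval expansions of $\|f\|_2^2$ and $\|\mathcal F_m(f)\|_2^2$ leaves exactly the tail $\tfrac12\sum_{k>m}(a_k^2+b_k^2)$; the stated lower bound follows by discarding all tail terms except $k=m+1$.

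The substantive part is (ii), where $L_2$ estimates no longer suffice and one must pass to the $\mathcal C^1$ norm. The key is that differentiation acts simply on Fourier coefficients: integrating by parts over a period, where the boundary terms cancel by periodicity, shows that the Fourier coefficients of $f'$ are $\tfrac{2\pi k}{T}b_k$ and $-\tfrac{2\pi k}{T}a_k$, and consequently $\mathcal F_m(f)'=\mathcal F_m(f')$. From here the plan is to establish absolute summability of the coefficient sequences, $\sum_k(|a_k|+|b_k|)<\infty$ and $\sum_k k(|a_k|+|b_k|)<\infty$, by combining the decay coming from the smoothness of $f$ with the Cauchy--Schwarz inequality against $\sum 1/k^2<\infty$; the Weierstrass $M$-test then yields uniform convergence of both the Fourier series of $f$ and of its termwise derivative. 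Since the partial sums already converge in $L_2$ to $f$ and $f'$, their uniform limits must coincide with the continuous functions $f$ and $f'$, giving $\|f-\mathcal F_m(f)\|_\infty\to0$ and $\|f'-\mathcal F_m(f)'\|_\infty\to0$, i.e. $\|f-\mathcal F_m(f)\|_{\mathcal C^1}\to0$.

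I expect the delicate point to be exactly this absolute summability for the \emph{derivative} series, $\sum_k k(|a_k|+|b_k|)<\infty$: controlling it is what genuinely uses the regularity of $f$ beyond mere continuity, and it is the step where the smoothness hypothesis must be spent. For the smooth limit cycles to which the proposition is applied, the Fourier coefficients decay faster than any power of $k$, so this summability, and hence all of (ii), is immediate.
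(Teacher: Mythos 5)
The paper does not actually prove Proposition~\ref{propiet}: it is stated as a collection of classical facts with pointers to K\"orner and Tolstov, so your write-up is being measured against the standard theory rather than against an argument in the text. Your treatment of (i), (iii) and (iv) via the orthogonal-projection and Pythagoras mechanism is correct and is precisely the classical route; there is nothing to object to there.

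The genuine gap is in (ii), and you half-identified it yourself. From $f\in\mathcal C^1$, Bessel's inequality applied to $f'$ gives $\sum_k k^2(a_k^2+b_k^2)<\infty$, and Cauchy--Schwarz against $\sum_k 1/k^2$ then yields $\sum_k(|a_k|+|b_k|)<\infty$; that half of your plan works and gives $\|f-\mathcal{F}_m(f)\|_\infty\to0$. But the same trick applied to the derivative series $\sum_k k(|a_k|+|b_k|)$ requires $\sum_k k^4(a_k^2+b_k^2)<\infty$, i.e.\ square-summability of the Fourier coefficients of $f''$ --- essentially a $\mathcal C^2$ (or $f'$ absolutely continuous with $L_2$ derivative, or $f'$ H\"older via Bernstein's theorem) hypothesis. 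Your proposed Cauchy--Schwarz step is exactly one derivative short, and no alternative argument can close it under the stated hypothesis, because (ii) as stated is false at bare $\mathcal C^1$ regularity: take du Bois-Reymond's continuous periodic $g$ whose Fourier partial sums diverge at a point, normalize it to zero mean, and set $f(t)=\int_0^t g(s)\,ds$. Then $f$ is $T$-periodic and $\mathcal C^1$, the integration by parts you invoke gives $(\mathcal{F}_m f)'=\mathcal{F}_m(f')=\mathcal{F}_m(g)$, and hence $\|f-\mathcal{F}_m(f)\|_{\mathcal C^1}\ge\|g-\mathcal{F}_m(g)\|_\infty\not\to0$. So the ``delicate point'' you flagged is not merely where the hypothesis must be spent --- the hypothesis is insufficient, and the proposition itself is mildly overstated. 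Two mitigating remarks: as you correctly observe, the curves to which the proposition is applied are analytic (solutions and perturbations of a polynomial vector field), with coefficients decaying faster than any power, so nothing downstream in the paper is affected; and for the only use made of (ii), namely producing \emph{some} trigonometric polynomial $\mathcal C^1$-close to $\tilde z_\varepsilon$ in Theorem~\ref{fita}(i), genuine $\mathcal C^1$ regularity does suffice if one replaces the partial sums by Fej\'er (Ces\`aro) means: $\sigma_m(f')\to f'$ uniformly for continuous $f'$, and integrating these means produces trigonometric polynomials converging to $f$ in the $\mathcal C^1$ norm --- but these are not the $\mathcal{F}_m(f)$ that statement (ii) asserts converge.
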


Consider the curve $\tilde z_\varepsilon(t)$ given in Theorem
\ref{th1}, which is strictly transversal to the flow \eqref{eq1}. The next result shows that there always exists a trigonometric curve of the form \eqref{curve} of degree $m,$ high enough, and with
coefficients in $\Q,$ which is also strictly transversal   to the
flow \eqref{eq1}. Also we prove that if the Fourier series of a
limit cycle $\gamma$ has a coefficient with a ``high'' value, then
until its corresponding harmonic has been passed (that is, until we
take $m$ higher than the index of this harmonic) one cannot ensure that the
trigonometric curve $W^{(m)}(t)$ constructed in section \ref{sect2}
is near enough to the curve $\tilde z_\varepsilon(t).$ See the definition of the curve $W^{(m)}(t)$ in (\ref{www}).

\begin{theorem}\label{fita}
(i) Let $\gamma(t)=(\gamma_1(t),\gamma_2(t))$ be a $T$-periodic
limit cycle of system \eqref{eq1}. Let $|\varepsilon|>0$ be small
enough, such that the $T$-periodic closed curve given in
Theorem~\ref{eq1}, $\tilde z_\varepsilon(t)$ associated to
$\gamma(t)$, is strictly transversal to the flow given
by~\eqref{eq1}. Then if $m=m(\varepsilon)$ is high enough, there is a
$T$-periodic trigonometric curve of degree $m$ and  rational
coefficients which is also strictly transversal to the flow given
by~\eqref{eq1}.

(ii) Taking $|\varepsilon|$ smaller, if necessary, it holds that
\begin{align*} ||\tilde z_\varepsilon-W^{(m)}||_{\mathcal{C}^1}>
\frac1{\sqrt{2}}||\gamma_j-\mathcal{F}_m(\gamma_j)||_2\ge\frac1{2}\sqrt{a_{m+1}^2+b_{m+1}^2},
\end{align*}
where $j$ is either $1$ or $2$ and  $a_{m+1}$ and $b_{m+1}$ are the coefficients of the $m+1$
harmonics of the Fourier series of  $\gamma_j(t).$
\end{theorem}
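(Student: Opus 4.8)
The plan is to treat the two parts by quite different tools: part (i) is a stability statement asserting that strict transversality is an \emph{open} condition in the $\mathcal{C}^1$ topology, while part (ii) is a lower bound that rests on the fact that the Fourier polynomial is the best $L_2$-approximation of a given degree, i.e.\ Proposition~\ref{propiet}(i), together with the Plancherel tail estimate in Proposition~\ref{propiet}(iv).

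For part (i) I would first record that strict transversality of $\tilde z_\varepsilon$ means the continuous $T$-periodic function $f(t):=X(\tilde z_\varepsilon(t))\cdot(\tilde z_\varepsilon'(t))^\perp$ never vanishes, so $\delta:=\min_t|f(t)|>0$ by compactness of $[0,T]$. The next step is to note that the transversality functional $w\mapsto X(w)\cdot(w')^\perp=P(w)w_2'-Q(w)w_1'$ is Lipschitz in the $\mathcal{C}^1$-norm on $\mathcal{C}^1$-bounded families of curves: since $P,Q$ are polynomials they are Lipschitz on any compact set, and the expression is affine in $w'$, so one gets a uniform bound $|f_w(t)-f_{\hat w}(t)|\le C\,\|w-\hat w\|_{\mathcal{C}^1}$ for curves in a fixed compact neighbourhood of $\tilde z_\varepsilon$. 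I would then approximate in two stages. By Proposition~\ref{propiet}(ii) the Fourier polynomial $\mathcal{F}_m(\tilde z_\varepsilon)$ converges to $\tilde z_\varepsilon$ in the $\mathcal{C}^1$-norm — this is exactly the ingredient that controls the derivative term — so for $m$ large its transversality function stays within $\delta/2$ of $f$ and is still nonvanishing. Finally, a degree-$m$ trigonometric polynomial depends continuously (indeed Lipschitz) on its finitely many coefficients in the $\mathcal{C}^1$-norm, so rounding these coefficients to nearby rationals keeps the curve $\mathcal{C}^1$-close to $\mathcal{F}_m(\tilde z_\varepsilon)$, hence strictly transversal. This produces the required rational trigonometric curve and proves (i).

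For part (ii) I would fix $m$ and choose $j\in\{1,2\}$ so that $r_j:=\gamma_j-\mathcal{F}_m(\gamma_j)$ is nonzero (if both residuals vanish then $\gamma$ is itself a degree-$m$ trigonometric polynomial and there is nothing to say). The starting point is the chain of norm inequalities
\[
\|\tilde z_\varepsilon-W^{(m)}\|_{\mathcal{C}^1}\ \ge\ \|\tilde z_{\varepsilon,j}-W_j^{(m)}\|_2\ \ge\ \|\tilde z_{\varepsilon,j}-\mathcal{F}_m(\tilde z_{\varepsilon,j})\|_2,
\]
where the first step uses the definition of the vector $\mathcal{C}^1$-norm together with $\|\cdot\|_2\le\|\cdot\|_\infty\le\|\cdot\|_{\mathcal{C}^1}$, and the second step uses that $W^{(m)}$ is, by \eqref{www}, a $T$-periodic trigonometric polynomial of degree $m$, so Proposition~\ref{propiet}(i) bounds its $L_2$-distance below by the best-approximation residual. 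Writing $\tilde z_{\varepsilon,j}=\gamma_j+\varepsilon v_j$ with $v_j=(\tilde u\,\gamma'^\perp)_j$ and using linearity of $\mathcal{F}_m$, the last quantity equals $\|r_j+\varepsilon(v_j-\mathcal{F}_m(v_j))\|_2\ge\|r_j\|_2-|\varepsilon|\,\|v_j-\mathcal{F}_m(v_j)\|_2$. For $m$ fixed and $|\varepsilon|$ small enough the subtracted term is below $(1-1/\sqrt2)\|r_j\|_2$, which yields the strict bound $>\tfrac{1}{\sqrt2}\|\gamma_j-\mathcal{F}_m(\gamma_j)\|_2$; this is precisely where ``taking $|\varepsilon|$ smaller, if necessary'' is used. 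The final inequality $\tfrac{1}{\sqrt2}\|\gamma_j-\mathcal{F}_m(\gamma_j)\|_2\ge\tfrac12\sqrt{a_{m+1}^2+b_{m+1}^2}$ is then immediate from Proposition~\ref{propiet}(iv).

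The routine parts are the norm comparisons, the finite-dimensional rational rounding, and the reverse triangle estimate. The step I would be most careful about is the apparent circularity between the two parts, since part (i) forces $m$ large relative to $\varepsilon$ while the perturbation estimate in part (ii) forces $\varepsilon$ small relative to $m$. I would dissolve this by observing that the inequality in (ii) involves only $\mathcal{C}^1$- and $L_2$-distances and makes no use of transversality of $W^{(m)}$, so it in fact holds for \emph{every} degree-$m$ trigonometric curve; thus (ii) is simply a property of the target $\tilde z_\varepsilon$ and the degree $m$, and for fixed $m$ one only needs the threshold $\varepsilon_0(m)$ guaranteeing that the fixed finite number $\|v_j-\mathcal{F}_m(v_j)\|_2$ (bounded by $\|v_j\|_2$) is dominated, after which the conclusion follows with no feedback into the choice of $m$.
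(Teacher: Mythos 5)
Your proposal is correct and follows essentially the same route as the paper: part (i) via openness of strict transversality in the $\mathcal{C}^1$ topology, Proposition~\ref{propiet}(ii), and rational rounding of finitely many coefficients; part (ii) via the chain $\|\cdot\|_{\mathcal{C}^1}\ge\|\cdot\|_\infty\ge\|\cdot\|_2$ combined with Proposition~\ref{propiet}(i) and then (iv). Your only deviation is an improvement in rigor rather than in method: where the paper justifies the factor $1/\sqrt{2}$ by the informal remark that $\tilde z_\varepsilon$ tends uniformly to $\gamma$, you exploit the exact decomposition $\tilde z_\varepsilon=\gamma+\varepsilon v$, the linearity of $\mathcal{F}_m$, and the reverse triangle inequality to make the threshold $\varepsilon_0(m)$ explicit, which also cleanly dispels the apparent circularity between the choices of $m$ and $\varepsilon$.
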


\begin{proof} (i) It is clear that if  $\{z(s)\,:\, s\in[0,T]\}$ and $\{\bar z(s)\,:\, s\in[0,T]\}
\}$ are two  $T$-periodic  $\mathcal{C}^1$ closed curves, one of
them is strictly transversal to the flow ~\eqref{eq1} and $|| z-\bar
z||_{\mathcal{C}^1}$ is small enough, then the other curve is
strictly transversal as well. By Proposition~\ref{propiet} (ii) it
holds that for $m$ high enough there exists a $T$-periodic
trigonometric polynomial curve $\tilde W(t)$, of degree $m$ with real
coefficients and such that $||\tilde z_\varepsilon
-\tilde W||_{\mathcal{C}^1}$ is as small as desired. Taking rational
approximations of its coefficients with enough accuracy we get a new curve $W(t)$ that
proves item (i).

(ii) Fix for instance $j=1$. We write
$W^{(m)}=(W_1^{(m)},W_2^{(m)})$ where
 $W_1^{(m)}(t)=  w^{(m)}_1\left( \frac{2\pi}{T}t\right),$ see \eqref{www}.  Then
\begin{align}
||\tilde z_\varepsilon-W^{(m)}||_{\mathcal{C}^1}&>||\tilde
x_\varepsilon-W_1^{(m)}||_{\mathcal{C}^1}> ||\tilde
x_\varepsilon-W_1^{(m)}||_\infty\nonumber\\&\ge ||\tilde
x_\varepsilon-W_1^{(m)}||_2\ge ||\tilde
x_\varepsilon-\mathcal{F}_m(\tilde x_\varepsilon)||_2,\label{desi}
\end{align}
where in the last inequality we have used Proposition~\ref{propiet}
(i), that states that the Fourier polynomial is the best
approximation of a function, considering the norm $L_2$.

Since the curves $\tilde z_\varepsilon$ tend uniformly to $\gamma$
when $\varepsilon$ goes to zero, we have that for $|\varepsilon|$
small enough
\[
||\tilde x_\varepsilon-\mathcal{F}_m(\tilde x_\varepsilon)||_2>\frac1{\sqrt{2}}||\gamma_1-\mathcal{F}_m(\gamma_1)||_2.
\]
In the previous inequality we have chosen the value $1/\sqrt{2}$. We could have chosen any positive value lower than $1$. Since $x_\varepsilon$ tends uniformly to $\gamma_1$ when $\varepsilon$ goes to zero, we have that the quantity $||\tilde x_\varepsilon-\mathcal{F}_m(\tilde x_\varepsilon)||_2$ is close to the quantity $||\gamma_1-\mathcal{F}_m(\gamma_1)||_2$ when $\varepsilon$ tends to zero. For $|\varepsilon|$ small enough, one exceeds the other by a positive constant lower than $1$. If one takes a smaller value of $|\varepsilon|$ this constant can be reduced.

Then, from \eqref{desi},
\[
||\tilde z_\varepsilon-W^{(m)}||_{\mathcal{C}^1}>\frac1{\sqrt{2}}
||\gamma_1-\mathcal{F}_m(\gamma_1)||_2\ge\frac1{2}\sqrt{a_{m+1}^2+b_{m+1}^2},
\]
where we have used Proposition \ref{propiet} (iv). Then the theorem
follows.
\end{proof}

\subsection{Fourier coefficients of systems (\ref{vdpe1}) and
(\ref{Bruss13})}

{F}rom Theorem \ref{fita} we know that for  having a good enough
approximation to the curve  $\tilde z_\varepsilon(t)$ given in
Theorem \ref{th1} by a trigonometric polynomial curve we need to
consider $m$ such that the coefficients of the $m$ harmonics of the
Fourier series of $\gamma(t)$ are small enough.

\begin{table}[ht]
\begin{center}
\begin{tabular}{|c||c|c|c|c|c|c|c|}
\hline
 $m$ &1&3& 5-7& 9&$11-13$&$15-17$&19\\
\hline\hline Coeff. &$1$& $10^{-1}$&
$10^{-2}$& $10^{-3}$ &$10^{-4}$&$10^{-5}$&$10^{-6}$\\
\hline
\end{tabular}
\end{center}
\vspace{0.2cm} \caption{Order of magnitude of the coefficients of
the $m$ harmonics of the Fourier series of the first component of
the limit cycle of the van der Pol system. When $m$ is even all the
coefficients are zero.}\label{table:vdp1}
\end{table}

\begin{table}[ht]
\begin{center}
\begin{tabular}{|c||c|c|c|c|c|c|c|}
\hline
 $m$ &$0-2$&$3-8$& $9-16$& $17-20$\\
\hline\hline Coeff. &$1$& $10^{-1}$&
$10^{-2}$& $10^{-3}$\\
\hline
\end{tabular}
\end{center}
\vspace{0.2cm} \caption{Order of magnitude of the coefficients of
the $m$ harmonics of the Fourier series of the first component of
the limit cycle of the Brusselator system.}\label{table:vdp}
\end{table}

Therefore the number of points $n=2m+1$ used to construct our curve
$W^{(m)}(t)$ is strongly related with the size of the Fourier
coefficients of $\gamma(t)$. These coefficients can be numerically
obtained before starting our process for obtaining a Poincar\'{e}
annular region for proving the existence of a periodic orbit.  In
Tables \ref{table:vdp1} and \ref{table:vdp} we show the order of
magnitude of them for the first component $\gamma_1(t)$ of the limit
cycles  $\gamma(t)$ of the van der Pol \eqref{vdpe1} and the
Brusselator \eqref{Bruss13} systems. The results for the second
component are essentially the same. Notice that the modulus of the
coefficients of the harmonics in the Brusselator system descend much
more slowly than in the van der Pol system, giving a clear
explanation of the harder difficulty for finding trigonometric
curves without contact for the Brusselator system.

\section{The Rychkov system}\label{sect33}

The aim of this section is to prove Theorem~\ref{rych}. As we have
already said in the introduction we consider the system studied by
Rychkov in 1975, see \cite{Rychkov75},
\begin{equation} \label{eqex3} \dot{x}   =   y-\left( x^5-\mu x^3 + \delta x
\right), \quad \dot{y}   =   -x, \end{equation} with $\delta, \mu
\in \mathbb{R}$. The semi-axis $\Sigma   :=   \left\{ (x_0,0) \in
\mathbb{R}^2   :   x_0>0 \right\}$ is a transversal section. This
system is also studied in \cite{Alsholm92, GiaNeu98, Odani96}. The
following features of system (\ref{eqex3}) can be found in the
 aforementioned references. The origin is the only finite singular point and it is a
 focus. Rychkov \cite{Rychkov75} proved that it has at most two limit cycles and that
 for $\delta<0$ there exists a unique limit cycle, which is stable.
 The line $\delta=0$ is a curve of occurrence of Hopf bifurcations.
 When $\mu>0$ there is a curve of bifurcation values $\delta   =   \Delta(\mu)$
 of a saddle-node bifurcation of limit cycles. Odani \cite{Odani96} proved that if $\delta>0$ and
 $0<\delta < \mu^2/5$, then the system has two limit cycles. Figure
 \ref{Rychov_bif} represents the bifurcation diagram of the Rychkov
 system (\ref{eqex3}) in the $(\delta,\mu)$-plane.

\begin{figure}[htb]
\includegraphics[width=0.6\textwidth]{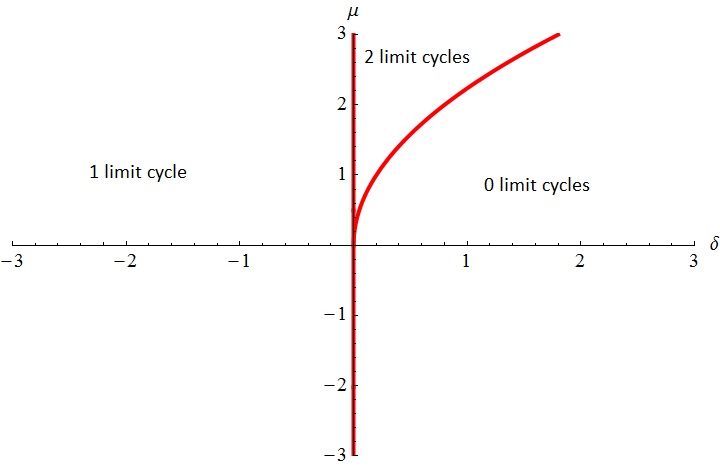}
\caption{Bifurcation diagram of the Rychkov system (\ref{eqex3}).
The curve of the saddle-node bifurcation is qualitative.}
\label{Rychov_bif}
\end{figure}

Here we fix $\mu=1$ and we are interested in finding sharp bounds
for $\delta^*=\Delta (1)$. Since the Rychkov system is a semi-complete family of rotated vector
fields with respect to $\delta$, see~\cite{Duff,Per93,Perko} it
holds that:

\begin{itemize}

\item It for $\delta=\bar\delta$ the system has two limit cycles then $\bar{\delta}<\delta^*.$
Therefore, to prove the inequality $0.224\,{<}\,\delta^*$, it
suffices to prove that the Rychkov system has two limit cycles for
$\delta=0.224.$

\item Similarly, if for $\delta=\hat\delta$ the system has no limit cycle then
$\delta^*<\hat\delta.$ Then, to prove the inequality
$\delta^*<\,\,0.2249654$, it suffices to prove that the Rychkov system
has no limit cycle for $\delta=\,0.2249654.$

\end{itemize}

Therefore, the proof of Theorem \ref{rych} can be reduced to the
study of the above two given values of $\delta$. We study each case
in a different subsection.

\subsection{The proof that system \eqref{rych1} has two limit cycles for
$\delta=0.224$}

Although it suffices to study the case $\delta=0.224$, we prefer to
study also the smaller values of $\delta,$ $0.2$ and $0.22$ to see
how the two limit cycles evolve with the parameter. In the three cases, the origin is
a strong stable focus, the smaller limit cycle is hyperbolic and unstable and the bigger limit cycle is hyperbolic and stable.

\smallbreak

\subsubsection{The case $ \delta=0.2$} The limit cycles cut $\Sigma$ at
$x_0 \sim 0.632018$ and $x_0 \sim
 0.893787$.  By our method we have been able to construct three transversal curves
 which provide two Poincar\'e--Bendixson regions. These regions allow to locate
 each one of the limit cycles.
 The interior transversal curve cuts $\Sigma$ at $x_0 =0.474059$,
 it has been obtained from the unstable limit cycle taking $\varepsilon   =
 0.1$ and $m=5$. The transversal curve in the middle cuts $\Sigma$ at $x_0 =0.711158$,
  it has been obtained from the unstable limit cycle taking $\varepsilon   =
   -0.05$ and $m=7$. The exterior transversal curve cuts $\Sigma$ at $x_0 =1.00597$,
   it has been obtained from the stable limit cycle taking $\varepsilon   =   -0.1$ and $m=5$.
   These curves, together with the limit cycles are represented in Figure \ref{Rychkov-d02}.

\begin{figure}[htb]
\includegraphics[width=0.5\textwidth]{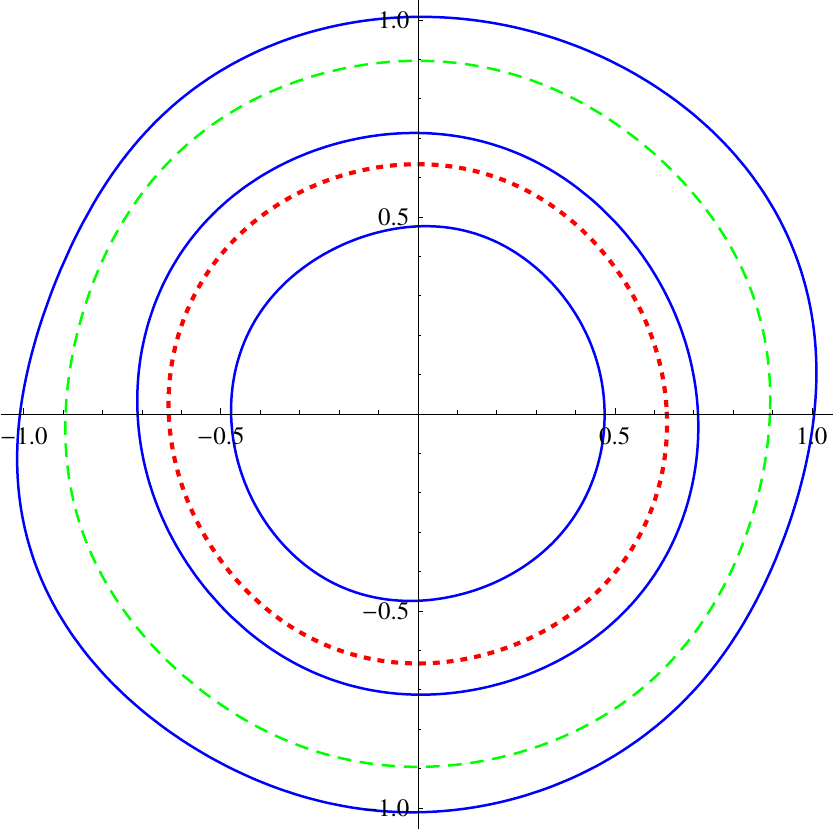}
\caption{Transversal curves are represented in blue and the limit cycles in dotted red (unstable) and
dashed green (stable) for the Rychkov system with $\mu=1$ and $\delta=0.2$.} \label{Rychkov-d02}
\end{figure}

\subsubsection{The case $ \delta=0.22$}
In this case the limit cycles cut $\Sigma$ at $x_0   \sim
0.714276$ and $x_0 \sim 0.830266$. The interior transversal
 curve cuts $\Sigma$ at $x_0 =0.57421$, it has been obtained from the unstable limit
 cycle taking $\varepsilon   =   0.1$ and $m=7$. The transversal curve in the middle
 cuts $\Sigma$ at $x_0 =0.74227$, it has been obtained from the unstable limit cycle
 taking $\varepsilon   =   -0.02$ and $m=7$. The exterior transversal curve
 cuts $\Sigma$ at $x_0 =0.8905$, it has been obtained from the stable limit cycle
 taking $\varepsilon   =   -0.05$ and $m=7$.

\subsubsection{The case $ \delta=0.224$} For this value of $\delta$ the
limit cycles cut $\Sigma$ at $x_0   \sim   0.748705$ and $x_0   \sim
0.799588$. As in the previous case, we have been
 able to find three transversal curves which analytically prove the existence of the
 two limit cycles. The interior transversal curve cuts $\Sigma$ at $x_0 =0.615043$,
 it has been obtained from the unstable limit cycle taking $\varepsilon   =   0.1$
 and $m=7$. The transversal curve in the middle cuts $\Sigma$ at $x_0 =0.75939$,
 it has been obtained from the unstable limit cycle taking $\varepsilon   =   -0.008$
 and $m=10$. The exterior transversal curve cuts $\Sigma$ at $x_0 =0.862111$,
 it has been obtained from the stable limit cycle taking $\varepsilon   =   -0.05$
 and $m=7$. See Figure \ref{Rychkov-d0224}.

\begin{figure}[htb]
\includegraphics[width=0.5\textwidth]{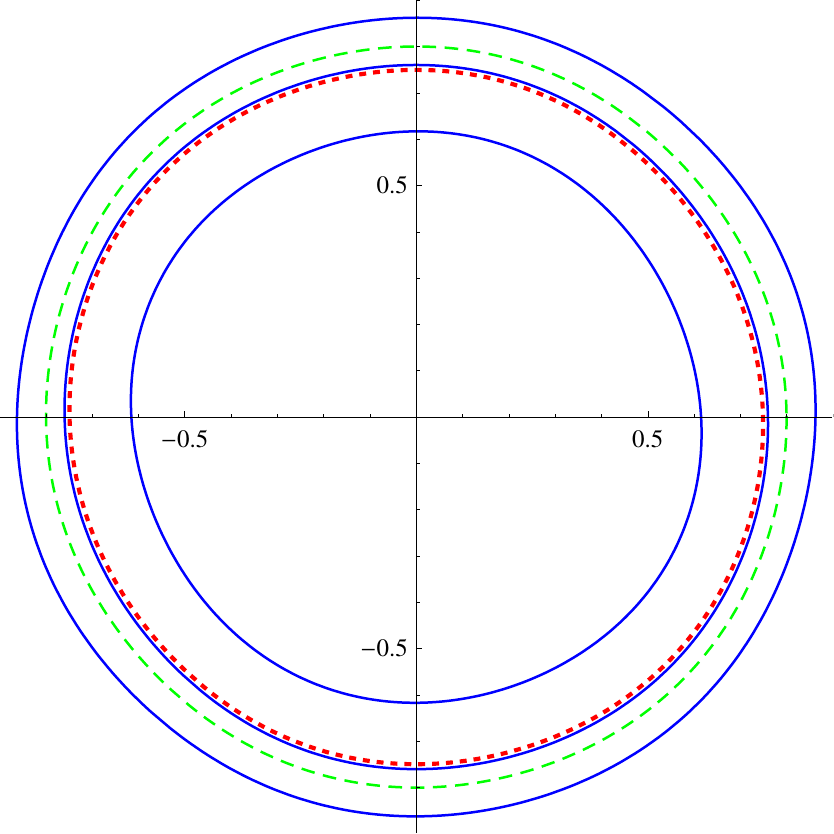}
\caption{Transversal curves are represented in blue and the limit cycles in dotted red (unstable) and
dashed green (stable) for the Rychkov system with $\mu=1$ and $\delta=0.224$.} \label{Rychkov-d0224}
\end{figure}

\subsection{The proof that system \eqref{rych1} has
no limit cycle for $\delta=0.2249654$}

Before proving the second part of the theorem we need some
preliminary results.

The first lemma recalls a classical method for proving non-existence
of periodic orbits. We state and prove it on the plane, but  notice
that it works in any dimension.

\begin{lemma}\label{dot} Let $\mathcal{U}$ be an open subset of $\R^2$ and let
 $B:\mathcal{U}\to\R$ be
a class $\mathcal{C}^1$ function such that its total derivative with
respect to the flow \eqref{eq1},
\[
\dot B(x,y)= \nabla B(x,y)\cdot X(x,y)=\frac{\partial
B(x,y)}{\partial x} P(x,y)+\frac{\partial B(x,y)}{\partial y}
Q(x,y)
\]
does not change sign on $\mathcal{U}$ and vanishes on a set of zero
Lebesgue measure which is not invariant by the flow of \eqref{eq1}.
Then the system \eqref{eq1} has not periodic orbits totally contained in
$\mathcal{U}.$
\end{lemma}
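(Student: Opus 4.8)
The plan is to argue by contradiction, reducing everything to the elementary observation that the integral of $\dot B$ around a closed orbit must vanish. Suppose, contrary to the claim, that there is a periodic orbit $\Gamma=\{\gamma(t)\,:\,t\in[0,T]\}$ of \eqref{eq1} with $\gamma(0)=\gamma(T)$ and $\Gamma\subset\mathcal{U}$. First I would introduce the scalar function $g(t)=B(\gamma(t))$, which is of class $\mathcal{C}^1$ and $T$-periodic because $B$ is $\mathcal{C}^1$ and $\gamma$ is $T$-periodic. Differentiating along the flow and using $\gamma'(t)=X(\gamma(t))$ gives
\[
g'(t)=\nabla B(\gamma(t))\cdot\gamma'(t)=\nabla B(\gamma(t))\cdot X(\gamma(t))=\dot B(\gamma(t)).
\]

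Next I would integrate this identity over one period. Since $g$ is $T$-periodic,
\[
\int_0^T \dot B(\gamma(t))\,dt=\int_0^T g'(t)\,dt=g(T)-g(0)=0.
\]
By hypothesis $\dot B$ does not change sign on $\mathcal{U}$; assume without loss of generality $\dot B\ge 0$ there. Then the integrand $\dot B(\gamma(t))$ is nonnegative and continuous with vanishing integral, so it is identically zero, that is, $\dot B(\gamma(t))=0$ for all $t\in[0,T]$. Equivalently, the whole orbit $\Gamma$ lies in the zero set $Z=\{(x,y)\in\mathcal{U}\,:\,\dot B(x,y)=0\}$.

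Finally I would extract the contradiction from the structure of $Z$. Being a periodic orbit, $\Gamma$ is an invariant set of the flow, and we have just shown $\Gamma\subset Z$; hence $Z$ would contain a complete, nonempty orbit of \eqref{eq1}, which is precisely what the assumption that $Z$ is not invariant under the flow forbids. This contradiction shows that no such $\Gamma$ can exist. I expect the only delicate point to be this last step: the zero-measure hypothesis by itself does not prevent $\Gamma\subset Z$, since a periodic orbit is already a curve of measure zero in $\mathbb{R}^2$, so the argument genuinely relies on the non-invariance of $Z$. It is therefore worth emphasizing that it is the constant-sign condition, together with the vanishing of the period integral, that forces $\Gamma$ \emph{entirely} into $Z$ rather than merely crossing it, and that one should read the non-invariance hypothesis as excluding any orbit of \eqref{eq1} sitting inside $Z$.
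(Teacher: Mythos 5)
Your proof is correct and takes essentially the same route as the paper's: integrate $\dot B$ along a hypothetical periodic orbit, use periodicity of $B(\gamma(t))$ to make the integral vanish, and invoke the constant sign of $\dot B$ together with the non-invariance of its zero set to get a contradiction. In fact you make explicit the one step the paper glosses over --- the paper simply asserts $0\ne\int_{t_1}^{t_2}\dot B(z(t))\,dt$, which is justified exactly by your observation that a vanishing integral would force the whole orbit into the zero set $Z$, and you are right to stress that the measure-zero hypothesis alone cannot rule this out, so the non-invariance hypothesis must be read as excluding any complete orbit of \eqref{eq1} contained in $Z$.
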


\begin{proof} Let $z(t)=(x(t),y(t))$ be any solution of \eqref{eq1},
contained  in $\mathcal U$ for $t\in [t_1,t_2],$  $t_1<t_2.$  Then,
\[
0\ne\int_{t_1}^{t_2} \dot B(z(t))\,dt= B(z(t_2))-B(z(t_1).
\]
Hence the orbit  cannot be periodic, as we wanted to prove.
\end{proof}

The next result is an adaptation of \cite[Thm. 3]{Cher} to our interests. We sketch its proof.

\begin{proposition}\mbox{\rm (\cite{Cher})} Given a classical polynomial
Li\'{e}nard system
\begin{equation}\label{liena}
\dot x= y-F(x),\quad  \dot y=-x
\end{equation}
and $n\in\N$, there exists a unique polynomial  $B_n(x,y)= \sum_{i=0}^{n} B_i(x)y^i$ such that
$B_n(0,y)=y^n$ and its total derivative with respect to \eqref{liena} is a polynomial that does
not depend on $y.$ \label{prop9}
\end{proposition}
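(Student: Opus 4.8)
The plan is to determine the coefficient polynomials $B_0(x),\dots,B_n(x)$ one at a time by comparing coefficients of powers of $y$ in the total derivative, exploiting the triangular structure of the resulting equations. First I would write the total derivative of $B_n(x,y)=\sum_{i=0}^n B_i(x)y^i$ with respect to \eqref{liena} explicitly:
\[
\dot B_n=\big(y-F(x)\big)\sum_{i=0}^n B_i'(x)y^i-x\sum_{i=1}^n i\,B_i(x)y^{i-1}.
\]
Collecting the coefficient of $y^j$ and adopting the convention that $B_i\equiv 0$ for $i<0$ and $i>n$, for every $j$ with $1\le j\le n+1$ this coefficient equals
\[
B_{j-1}'(x)-F(x)\,B_j'(x)-(j+1)\,x\,B_{j+1}(x),
\]
while the coefficient of $y^0$ equals $-F(x)B_0'(x)-x\,B_1(x)$. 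The requirement that $\dot B_n$ be independent of $y$ is precisely that the $n+1$ coefficients above, for $j=1,\dots,n+1$, all vanish; the surviving $y^0$ term is then automatically the claimed $y$-independent polynomial.

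Next I would solve these equations by descending induction on $j$, which is where the triangular structure is essential. The top equation ($j=n+1$) reads $B_n'(x)=0$, so the leading coefficient $B_n(x)$ is constant; the normalization $B_n(0,y)=y^n$ means exactly that $B_n(0)=1$ and $B_i(0)=0$ for $i<n$, forcing $B_n(x)\equiv 1$. For each $j$ from $n$ down to $1$ the vanishing condition can be rewritten as
\[
B_{j-1}'(x)=F(x)\,B_j'(x)+(j+1)\,x\,B_{j+1}(x),
\]
whose right-hand side involves only $B_j$ and $B_{j+1}$, already produced at earlier stages of the induction. Since a polynomial has a unique polynomial antiderivative attaining a prescribed value at $x=0$, and that value is pinned down to be $B_{j-1}(0)=0$ by the normalization (because $j-1<n$ at every such step), each $B_{j-1}(x)$ is a uniquely determined polynomial. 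Descending through $j=1$ yields all of $B_n,\dots,B_0$ uniquely, establishing both existence and uniqueness.

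The argument contains no genuine analytic obstacle: the only facts used are that antidifferentiation stays within the ring of polynomials and that the normalization $B_n(0,y)=y^n$ supplies exactly the integration constant needed at each level. Consequently the main point requiring care is purely the index bookkeeping, namely verifying that the equations are genuinely triangular, with each coefficient of $y^j$ coupling $B_{j-1}$ only to the previously determined $B_j$ and $B_{j+1}$, so that the recursion closes at every step. I expect that organizing this descending recursion cleanly, rather than any estimate, will be the crux of writing out the proof in full.
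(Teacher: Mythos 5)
Your proposal is correct and takes essentially the same route as the paper's proof: both expand $\dot B_n$, collect the coefficients of $y^{n+1},\dots,y$, and solve the resulting triangular system of trivial linear differential equations by descending recursion, with the normalization $B_n(0,y)=y^n$ supplying the integration constant at each level. Your write-up merely makes the uniqueness and index bookkeeping more explicit than the paper's sketch.
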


\begin{proof}
We have that
\begin{align*} \dot B_n(x,y)=& \frac{\partial B_n(x,y)}{\partial x}\big(y-F(x)\big)-
x\,\frac{\partial B_n(x,y)}{\partial y}\\
=&\,\,\Big( \sum_{i=0}^{n} B_i'(x)y^i \Big)\big(y-F(x)\big)-
x\,\sum_{i=1}^{n}i\, B_i(x)y^{i-1}\\= &
\,\, B_n'(x)y^{n+1}+  \big(B_{n-1}'(x)-F(x)B'_n(x)\big)y^{n}\\
&+\sum_{k=1}^{n-1}\Big(B_{k-1}'(x)-F(x)B_{k}'(x)-(k+1)xB_{k+1}(x)\Big)y^{k}\\&
-F(x)B_0'(x)-xB_1(x).
\end{align*}
We impose the conditions $B_n(0)=1, B_k(0)=0$ for $k=0,1,\ldots, n-1.$ Then we can solve step by
step the trivial linear differential equations given by the vanishing of the coefficients of
$y^{n+1}, y^n,\ldots$ until $y$. We obtain that $B_n(x)\equiv1,$ $B_{n-1}(x)\equiv0,$
$B_{n-2}(x)=nx^2/2$,
\[
B_{n-3}(x)=n\int_0^x sF(s)\,ds,\quad B_{n-4}(x) =n\int_0^x
sF^2(s)\,ds+\frac{n(n-2)}8 x^4
\]
and so on. Finally $\dot B(x,y)=-F(x)B_0'(x)-xB_1(x),$ as we wanted to prove.
\end{proof}

\noindent{\it The proof that the Rychkov system with $\mu=1$ and $\delta=0.2249654$ has no limit
cycle.} Applying Proposition~\ref{prop9} to system~\eqref{rych1},
\begin{equation*} \dot{x} \, = \, y-\left( x^5- x^3 + \delta x \right), \quad
\dot{y} \, = \, -x,\end{equation*} we get that
\[
\dot B_n(x,y)=x^nR_{4(n-1)}(x,\delta),
\]
where $R_{4(n-1)}$ is an even  polynomial in $x$ of degree $4(n-1).$
For instance, taking $n=4$ we get that
\begin{align*}
B_4(x,y)=&y^4+2 x^2y^2+\frac{4}{105} x^3 \left( 15 x^4-21 x ^2+35
\delta \right) y\\&+\frac{1}{30} x^4 \left( 10 x^8-24 x^6+30 \delta
x^4+15 x^4-40 \delta x^2+30 \delta^2+30 \right)
\end{align*}
and
\begin{align*}
R_{12}(x,\delta )= -\frac4{105}\Big(&105 {x}^{12}-315 {x}^{10}+
\left( 315 \delta +315 \right) {x}^{8}\\&
 -\left( 630 \delta +105 \right) {x}^{6}+ \left( 315 {\delta }^{2}+315 \delta +120
 \right) {x}^{4}\\&- \left( 315 {\delta }^{2}+126 \right) {x}^{2}+35 \delta
 \left( 3 {\delta }^{2}+4 \right)\Big).
\end{align*}

The discriminant of the above polynomial with respect to $x$, except
for some non-zero rational constant factor, is
\begin{align*}
\delta  \big( 3 {\delta }^{2}+4 \big)  \Big( &4233600000 {\delta
}^{7}-4953312000 {\delta }^{6}+59568485760 {\delta }^{5}\\&-
65416468320 {\delta }^{4}+256186378380 {\delta }^{3}-171344748015
{\delta }^{2}\\&+ 250762344740 \delta -52896972996 \Big)^{2}.
\end{align*} By using once more the Sturm's approach we can prove
that it only has one positive zero at $\delta=\delta_4\approx
0.2362516\ldots.$ Therefore, it is not difficult to prove that if
$\delta\ge0.236252$ then $R_{12}(x,\delta)<0.$ In fact, for our
interests it suffices to prove that $R_{12}(x, 0.236252)<0$ for all
$x\in\R.$ From this fact, for $\delta= 0.236252$, we have $\dot
B_{4}(x,y)\le0,$ and it vanishes only at $x=0$. Then, by
Lemma~\ref{dot} we know that for this value of $\delta$ the
system~\eqref{rych1} has no limit cycle. As a consequence we have that
$\delta^*<0.236252.$

Repeating the above procedure for different values of $n$ (even) we
improve the upper bound for $\delta^*$. Our results are presented in
Table~\ref{tab1}.

\begin{table}[ht]
\begin{center}
\begin{tabular}{|c||c|c|c|c|c|c|c|}
\hline
 $n$ &$50$&$100$& $150$& $200$& $250$&$300$\\
\hline\hline $\mbox{Bound}$&$0.2252$& $ 0.2251$&
$0.2250$& $0.2249715$ &$0.2249676$ &$0.2249654$\\
\hline
\end{tabular}
\end{center}
\caption{Upper bounds for $\delta^*$
for the Rychkov system \eqref{rych1} with $\mu \, = \, 1$.}\label{tab1}
\end{table}

It is remarkable  that increasing $n\le 300$ we have found that there exist values $\delta_n$
such that for $\delta>\delta_n$ it holds that $R_{4(n-1)}(x,\delta)<0$ and moreover that these
values seem to decrease monotonically towards $\delta^*.$ Observe also that for the case $n=300$
we must prove that the even polynomial of degree 1196, $R_{1196}(x,0.2249654),$ which has rational
coefficients, has no real roots. \qed

\section*{Acknowledgements}

The first author is partially supported by Spanish Government with
the grant MTM2013-40998-P and by Generalitat de Catalunya Government
with the grant 2014SGR568. The second and third authors are partially
 supported by a MINECO/FEDER grant number MTM2014-53703-P and by an
 AGAUR (Generalitat de Catalunya) grant number 2014SGR1204.

\end{document}